\documentclass[10pt, reqno]{amsart}
\usepackage[dvipsnames,usenames]{color}
\usepackage{hyperref}
\usepackage{graphicx}
\usepackage{epsfig}
\usepackage[latin1]{inputenc}
\usepackage{amsmath}
\usepackage{amsfonts}
\usepackage{amssymb}
\usepackage{amsthm}
\usepackage{amscd}
\usepackage{verbatim}
\usepackage{subfigure}
\usepackage{caption}
\usepackage{pinlabel}
\usepackage{stmaryrd}
\usepackage{enumerate}
\usepackage{todonotes}
\usepackage{bm}
\usepackage{thmtools}
\usepackage{thm-restate}
\usepackage{lipsum}
\usepackage{setspace}
\usepackage{mathtools}
\usepackage[all]{xypic}
\usepackage[abs]{overpic}
\usepackage{color}
\usepackage[shortlabels]{enumitem}

\allowdisplaybreaks

\usepackage{tikz}
\usetikzlibrary{arrows}
\usetikzlibrary{decorations.pathreplacing}
\usepackage{verbatim}
\usetikzlibrary{cd}
\tikzset{taar/.style={double, double equal sign distance, -implies}}
\tikzset{amar/.style={->, dotted}}
\tikzset{dmar/.style={->, dashed}}
\tikzset{aar/.style={->, very thick}}

    \oddsidemargin  0.0in
    \evensidemargin 0.0in
    \textwidth      6.8in
    \headheight     0.0in
    \topmargin      0.0in
    \textheight=8.5in

\newtheorem{theorem}{Theorem}[section]

\newtheorem{lemma}[theorem]{Lemma}
\newtheorem{proposition}[theorem]{Proposition}

\newtheorem{corollary}[theorem]{Corollary}

\newtheorem{conjecture}[theorem]{Conjecture}

\newtheorem{question}[theorem]{Question}

\theoremstyle{definition}
\newtheorem{definition}[theorem]{Definition}

\theoremstyle{remark}
\newtheorem{remark}[theorem]{Remark}

\def\F{\mathbb{F}}

\def\R{\mathbb{R}}
\def\Z{\mathbb{Z}}




\def\Im{\operatorname{Im}}

\def \gr {\operatorname{gr}}

\def \br {\operatorname{br}}
\def \b {\operatorname{b}}

\def\d{\partial}


\def\HF {\mathit{HF}}
\newcommand\HFhat{\widehat{\HF}}

\def\CFK{\mathit{CFK}}

\newcommand\HFKm{\mathit{HFK}^-}

\def\Tor{\operatorname{Tor}}






\newcommand{\U}{\mathcal{U}}
\newcommand{\V}{\mathcal{V}}

\newcommand{\lab}[1]{$\scriptstyle #1$}

\newcommand{\Ord}{\operatorname{Ord}}

\author[J.\ Hom]{Jennifer Hom}
\thanks{The first author was partially supported by NSF grants DMS-1552285 and DMS-2104144.}
\address {School of Mathematics, Georgia Institute of Technology, Atlanta, GA 30332}
\email{hom@math.gatech.edu}

\author[T. Lidman]{Tye Lidman}
\thanks {The second author was partially supported by NSF grants DMS-1709702 and DMS-2105469 and a Sloan Fellowship}
\address{Department of Mathematics, North Carolina State University, Raleigh, NC, 27607}
\email{tlid@math.ncsu.edu}

\author[J.\ Park]{JungHwan Park}
\address {Department of Mathematical Sciences, KAIST, Daejeon 34141, Republic of Korea}
\email{jungpark0817@kaist.ac.kr}
\thanks{The third author was partially supported by Samsung Science and Technology Foundation (SSTF-BA2102-02) and the POSCO TJ Park Science Fellowship.}

\numberwithin{equation}{section}

\title{Unknotting number and cabling}

\begin{document}

\begin{abstract} 
The unknotting number of knots is a difficult quantity to compute, and even its behavior under basic satelliting operations is not understood.  We establish a lower bound on the unknotting number of cable knots and iterated cable knots purely in terms of the winding number of the pattern.  The proof uses Alishahi-Eftekhary's bounds on unknotting number from knot Floer homology together with Hanselman-Watson's computation of the knot Floer homology of cables in terms of immersed curves in the punctured torus.    
\end{abstract}

\maketitle

\section{Introduction}

The unknotting number $u(K)$ of a knot $K$ is one of the simplest knot invariants, yet not much is known about its properties. For instance, we do not know how the unknotting number changes with respect to the connected sum. Scharlemann~\cite{Scharlemann:1985-1} proved that composite knots have unknotting number at least 2, and there are some other partial known results e.g.\ \cite{Yang:2008-1, AlishahiEftekharyunknotting}, but the problem remains very much open. 

The connected sum operation is one of the simplest satellite constructions, and we know just as little about most other satellite operations, apart from special cases like Whitehead doubling.  In this article, we study the behavior of the unknotting number under another simple satellite operation, \emph{cabling}. Let $K_{p,q}$ denote the $(p, q)$-cable of $K$, where $p$ denotes the longitudinal winding, and let $K_{p_1, q_1; p_2, q_2; \dots ; p_m, q_m}$ denote the iterated cable of $K$. We may assume that $p > 1$ and $p_i>1$ since $K_{p,q}$ is isotopic to $K_{-p,-q}$ and $K_{1,q}$ is isotopic to $K$.

\begin{theorem}\label{thm:main}
If $K$ is a nontrivial knot, then $u(K_{p,q}) \geq p$ and $u(K_{p_1, q_1; p_2, q_2; \dots ; p_m, q_m}) \geq p_1 p_2 \dots p_m$.
\end{theorem}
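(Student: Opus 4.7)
The plan is to combine the two inputs advertised in the abstract. By the Alishahi--Eftekhary bound, one has $u(K) \geq \ell(K)$, where $\ell(K)$ denotes the $\U$-torsion order of the reduced knot Floer homology $\HFKm_{\red}(K)$, i.e.\ the smallest $n \geq 0$ such that $\U^n$ annihilates $\HFKm_{\red}(K)$. Thus it suffices to establish the torsion estimate
\[
\ell(K_{p,q}) \;\geq\; p \cdot \ell(K)
\]
together with the classical fact $\ell(K) \geq 1$ for any nontrivial $K$. Writing the iterated cable as $(K_{p_1,q_1;\dots;p_{m-1},q_{m-1}})_{p_m,q_m}$ and using that each intermediate cable is nontrivial (because every $p_i > 1$), repeated application then gives $\ell(K_{p_1,q_1;\dots;p_m,q_m}) \geq p_1 p_2 \cdots p_m \cdot \ell(K) \geq p_1 p_2 \cdots p_m$.

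To prove the key estimate I would work entirely in the Hanselman--Watson immersed curves framework. They associate to any knot $K$ a multicurve $\gamma(K)$ in the marked torus $T^2 \setminus \{z\}$, from which $\HFKm(K)$ and its $\U$-action are recovered via intersection data with a pair of standard reference arcs. Their cabling formula produces $\gamma(K_{p,q})$ as a $p$-fold (shear-adjusted) cyclic cover of $\gamma(K)$ in the torus, followed by a standard reattachment near the puncture. The geometric translation I want to use is that $\ell(K)$ equals the maximum number of consecutive intersections of a single component of $\gamma(K)$ with the horizontal reference arc recording the $\U$-action. For nontrivial $K$, some component of $\gamma(K)$ is non-horizontal, so $\ell(K) \geq 1$. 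Passing to the $p$-fold cover multiplies the length of each such intersection chain by exactly $p$, so the lifted multicurve realizes a chain of length at least $p \cdot \ell(K)$, yielding the desired inequality.

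The main obstacle is making this geometric interpretation of $\ell$ rigorous in the Hanselman--Watson language, and then showing that the long chain of $\U$-actions coming from the lifted piece persists in $\HFKm_{\red}(K_{p,q})$ and is not cancelled by the short arcs introduced near the puncture during reattachment. The natural strategy is to exhibit an explicit generator of $\HFKm_{\red}(K_{p,q})$ supported on a non-horizontal arc of the lift and to verify by a direct intersection argument that it is not killed by $\U^{p \cdot \ell(K) - 1}$, while all potentially cancelling contributions come from short horizontal arcs whose torsion is provably smaller. As a sanity check I would first run the argument on a model case such as cables of the trefoil, where both $\gamma(K)$ and the immersed curve cabling formula are explicit and the resulting torsion order can be compared with known computations for torus and cable knots.
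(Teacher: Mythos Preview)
Your overall architecture matches the paper's: reduce to a torsion-order estimate via Alishahi--Eftekhary, then use Hanselman--Watson's immersed-curve cabling formula. But the specific inequality you aim for, $\ell(K_{p,q}) \geq p\cdot\ell(K)$, is stronger than what the paper proves, and your justification for it is incorrect. Under the Hanselman--Watson procedure one draws $p$ copies of $\gamma(K)$, each scaled vertically by $p$, and then slides the pegs into a single vertical column. The number of pegs a right arc spans after this operation is \emph{not} simply $p$ times the original: it depends on whether each end of the arc turns up or down. A right arc of length $n$ becomes one of length $pn$, $p(n-1)+1$, or $p(n+1)-1$ according to its type $\eta_n^{\pm\pm}$; in particular an arc of type $\eta_n^{-+}$ yields only $p(n-1)+1 < pn$. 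The paper accordingly proves only
\[
\Ord(K_{p,q}) \;\geq\; \max\bigl\{\,p(\Ord(K)-1)+1,\; p\,\bigr\},
\]
which still suffices for the theorem.

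The step you miss entirely is the following: an arc of type $\eta_1^{-+}$ would cable to an arc of length $1$, giving no improvement whatsoever, so the bound $\Ord(K_{p,q}) \geq p$ would fail for knots with $\Ord(K)=1$ if such arcs could realize the torsion order. The paper rules out noninitial $\eta_1^{-+}$ arcs (and the analogous initial arcs when $\varepsilon=1$) by an algebraic argument using $\partial^2=0$ in the full $\F[\U,\V]$ knot Floer complex, not merely modulo $\U\V$: such an arc corresponds to a length-one horizontal arrow between particular generators of a vertically simplified basis, and one checks that its presence forces a nonzero $\partial^2$ term that cannot be cancelled. This $\partial^2=0$ obstruction is the genuine technical content of the proof. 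The concern you flag about cancellation from ``short arcs introduced near the puncture during reattachment'' is not the actual issue; the real danger is that a right arc can \emph{shrink} relative to the naive $pn$ prediction, and controlling this requires the algebraic input above.
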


We make a remark that it was previously shown by Scharlemann-Thompson~\cite[Corollary 3.3]{Scharlemann-Thompson:1988-1} that $u(K_{p,q}) \geq 2$ for any nontrivial knot $K$. In fact, they showed that a satellite knot of nonzero winding number in the pattern has unknotting number at least 2 (in particular, it recovers the result of Scharlemann~\cite{Scharlemann:1985-1} mentioned above). 

When $K$ is a torus knot $T_{p_0,q_0}$, we have the following stronger statement. Since $u(K)=u(-K)$ where $-K$ is the reverse of the mirror image of a knot $K$, we assume that  $T_{p_0,q_0}$ is a positive torus knot.  

\begin{theorem}\label{thm:iteratedtorus}
If $1<p_0 < q_0$ and $q_0 \neq 3$, then $u(T_{p_0, q_0; p_1, q_1; \dots ; p_m, q_m}) \geq p_0 p_1 \dots p_m-1$. 
\end{theorem}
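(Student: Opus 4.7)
The strategy is to reuse the machinery from the proof of Theorem \ref{thm:main}, namely the Alishahi--Eftekhary lower bound $u(K)\geq \Ord_\U \HFKm(K)$ combined with Hanselman--Watson's immersed curve description of $\CFKm$ of cables, but now feed in the finer structure available when the base pattern is itself a torus knot. The gain of an extra multiplicative factor of $p_0$ in the conclusion should come entirely from the fact that the immersed multicurve of $T_{p_0,q_0}$ is not a horizontal line (as for an unknot) but a staircase with nontrivial vertical extent.

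First I would recall that for $1<p_0<q_0$ the complex $\CFKm(T_{p_0,q_0})$ is a staircase with $p_0$ horizontal and $p_0$ vertical arrows whose step sizes are read off from the gap sequence of the numerical semigroup $\langle p_0,q_0\rangle$; in the Hanselman--Watson model this translates into a connected immersed curve in the marked torus whose lifts to the universal cover span a long arc in both coordinates. Then I would apply Hanselman--Watson iteratively exactly as in the proof of Theorem \ref{thm:main}: at the $i$-th step the immersed curve for $T_{p_0,q_0;p_1,q_1;\dots;p_i,q_i}$ is obtained from the previous curve by lifting and intersecting with a line of slope $q_i/p_i$, and the winding number multiplies by $p_i$.

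The heart of the argument is then a torsion-order computation. For a single cable $T_{p_0,q_0;p_1,q_1}$ I would identify, in the intersected complex, a chain of generators connected by $\U$-powered differentials of cumulative $\U$-weight at least $p_0p_1-1$, obtained by following the staircase of the torus-knot curve across the slope-$q_1/p_1$ line; the loss of $1$ reflects that the terminal corner of the staircase does not complete a full differential. Carrying the same bookkeeping through $m$ cabling steps multiplies the length of the chain by $p_i$ at each stage and gives the bound $\Ord_\U \HFKm\geq p_0p_1\cdots p_m-1$, which by Alishahi--Eftekhary yields the theorem.

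The principal obstacle will be verifying that this long chain survives each cabling step without being shortened by cancellation, and tracking the $\U$-weights carefully in the immersed curve picture. The hypothesis $q_0\neq 3$ should enter precisely to exclude the trefoil $T_{2,3}$, whose staircase has only two corners and a single diagonal step, and therefore does not carry enough length to produce the extra factor of $p_0$ beyond what Theorem \ref{thm:main} already supplies.
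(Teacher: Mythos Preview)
Your overall strategy---Alishahi--Eftekhary plus the Hanselman--Watson cabling description of the immersed curve---is exactly what the paper does. But the core computation you describe has a genuine conceptual error that would prevent the argument from going through as written.

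The torsion order $\Ord_\U \HFKm$ is the maximum $n$ such that $\F[\U]/(\U^n)$ occurs as a summand; in the immersed-curve model this is the maximal length of a \emph{single} right arc, not a ``cumulative $\U$-weight'' along a chain of differentials. A chain $x_1\xrightarrow{\U^{a_1}}x_2\xrightarrow{\U^{a_2}}x_3\cdots$ does not produce $\U$-torsion of order $a_1+a_2+\cdots$. So ``following the staircase across the slope line and summing the weights'' is not the right mechanism, and the heuristic ``the $-1$ comes from the terminal corner not completing a differential'' is not what is happening.

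What the paper actually uses is this. For $T_{p_0,q_0}$ one has $\Ord(T_{p_0,q_0})=p_0-1$, and the staircase shape of an $L$-space knot means the longest horizontal arrow is realized by a \emph{noninitial} right arc of type $\eta_{p_0-1}^{+-}$ (top end turns upward, bottom end turns downward). The Hanselman--Watson cabling procedure sends a noninitial $\eta_n^{+-}$ to a noninitial $\eta_{pn+p-1}^{+-}$; equivalently, $n+1\mapsto p(n+1)$. Iterating gives a right arc of length $p_0p_1\cdots p_m-1$, hence that torsion order, and Alishahi--Eftekhary finishes the proof. The ``$-1$'' is an artifact of the formula $pn+p-1$ for arcs of type $\eta^{+-}$, not of a missing terminal step.

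Your instinct about $q_0\neq 3$ is in the right direction but imprecise. The exclusion of $T_{2,3}$ is needed because it is the unique genus-one $L$-space knot: its staircase has only one right arc and that arc is the \emph{initial} one, so the $\eta^{+-}$ transformation rule above (which is stated for noninitial arcs) does not apply. For any other $T_{p_0,q_0}$ with $1<p_0<q_0$, the Hedden--Watson constraint $\alpha_0=\alpha_1+1$, $\alpha_{2\ell-1}=\alpha_{2\ell}+1$ on the Alexander polynomial forces an interior (hence noninitial) right arc of the required type and length.

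Finally, your description of the cabling operation (``lifting and intersecting with a line of slope $q_i/p_i$'') is the pairing picture for Dehn filling, not Hanselman--Watson's cabling recipe (take $p$ staggered vertical-scale-$p$ copies, connect loose ends, collapse pegs to a single vertical line). You will need the latter to see why $\eta_n^{+-}\mapsto\eta_{pn+p-1}^{+-}$.
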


The main tool we use to establish these lower bounds is the torsion order of a knot (see Definition~\ref{def:torsionorder}) coming from knot Floer homology~\cite{OSknots, Rasmussen-thesis}. The torsion order of a knot $K$ is denoted by $\Ord(K)$ and has many topological applications. In particular, Alishahi-Eftekhary~\cite[Theorem 1.1]{AlishahiEftekharyunknotting} (see also \cite{Zemke-funct}) showed that if $K$ is a knot, then
$$u(K) \geq  \Ord(K).$$


We quickly sketch the idea of the proof.  The knot Floer homology of a knot can be recast in the immersed curves interpretation~\cite{HRW,HRW:2018} of bordered Floer homology~\cite{LOT}.  The bordered Floer homology of the exterior of a knot $K$ in $S^3$ can be thought of roughly as an immersed multicurve in the boundary of the knot exterior (with a single point removed).  The torsion order can be morally thought of as the maximal number of times this multicurve wraps consecutively in the direction of the meridian of the knot.  Hanselman-Watson~\cite{HW-cables} gives a direct topological formula for transforming the immersed curve for $K$ into the immersed curve for $K_{p,q}$, and we will show the winding number loosely translates into how much additional wrapping is introduced to the curves.

Using the above strategy, we will obtain the following more general bound on $\Ord(K)$. The inequality itself may be of independent interest, and a similar computation for a certain family of knots can be found in~\cite[Proposition 1.2]{Hom-Kang-Park:2020-1}.

\begin{proposition}\label{prop:main}
If $K$ is a nontrivial knot, then 
$$\Ord(K_{p_1, q_1; p_2, q_2; \dots ; p_m, q_m}) \geq \max \left\{ p_1  p_2 \dots  p_m \left(\Ord(K)-1\right)+1, p_1  p_2 \dots  p_m \right\}.$$
\end{proposition}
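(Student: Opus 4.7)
The plan is to reduce to the single-cable case $m=1$ and then induct. For the induction, write $K' = K_{p_1,q_1;\ldots;p_{m-1},q_{m-1}}$ and apply the single-cable bound to $K'$ and $p = p_m$ to obtain
\[
\Ord(K_{p_1,q_1;\ldots;p_m,q_m}) \;\geq\; \max\bigl\{p_m(\Ord(K')-1)+1,\; p_m\bigr\}.
\]
Substituting the inductive estimate for $\Ord(K')$ and checking both branches of each max (using that $K$ nontrivial forces the $p_1\cdots p_m$ branch to be a valid fallback, and that $\Ord$ is a non-negative integer) recovers the claimed inequality by elementary arithmetic.

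The heart of the matter is therefore the single-cable inequality
\[
\Ord(K_{p,q}) \;\geq\; \max\bigl\{p(\Ord(K)-1)+1,\; p\bigr\},
\]
which I would attack geometrically via the immersed-curve picture. Replace $\CFK(K)$ by the Hanselman-Rasmussen-Watson multicurve $\gamma(K) \subset T^2 \setminus \{z\}$. Under the dictionary between this curve picture and the $\CFK$ complex, meridional segments of $\gamma(K)$ correspond to $\U$-arrows, so a maximal run of $n$ consecutive meridional segments along a component of $\gamma(K)$ is responsible for a $\U^n$-torsion element in homology; in particular $\Ord(K)$ equals the largest such run-length on $\gamma(K)$. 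Now apply Hanselman-Watson's cabling construction from \cite{HW-cables}: the multicurve $\gamma(K_{p,q})$ is obtained by lifting $\gamma(K)$ to a $p$-fold cyclic cover of the punctured torus (determined by the slope $p/q$) and then performing explicit cut-and-paste operations near the preimages of the basepoint. Under the lifting, a meridional run of length $\Ord(K)$ produces a run of length $p\cdot\Ord(K)$ upstairs; the subsequent reprojection truncates this run by at most $p-1$ boundary segments, leaving at least $p(\Ord(K)-1)+1$ consecutive meridional segments in $\gamma(K_{p,q})$. For the second branch of the max, Hanselman-Watson's recipe always produces (when $p \geq 2$) an auxiliary wrapping contribution of $p$ meridional segments, independently of the structure of $\gamma(K)$.

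The main obstacle is the careful accounting in the previous paragraph: one must verify precisely how the cut-and-paste operations near the basepoint preimages interact with a chosen maximal meridional run. A badly placed surgery could in principle split a long run into two shorter ones, so the argument requires selecting a lift of the maximal run that is well-positioned in the cyclic cover (achievable by choosing an appropriate fundamental domain), and then checking that only the endpoint segments of the run are affected by the reprojection. The analogous verification that an auxiliary wrapping component of length $p$ is always present for $p \geq 2$ is a shorter but similarly local check. Once these two local pictures are set up, the inequality follows by direct counting, and the full proposition follows by the induction above.
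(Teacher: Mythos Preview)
Your induction on $m$ has a genuine gap when $\Ord(K)=1$. Concretely, take $p_1=p_2=2$: the target is $\Ord(K_{2,q_1;2,q_2})\geq 4$, but your single-cable bound gives only $\Ord(K_{2,q_1})\geq 2$, and then applying the single-cable bound again yields
\[
\Ord(K_{2,q_1;2,q_2}) \;\geq\; \max\{\,2(2-1)+1,\;2\,\} \;=\; 3,
\]
which is short of $4$. The arithmetic you defer to ``elementary checking'' simply does not close: the second branch of the max at step $m$ only sees $p_m$, not $p_1\cdots p_m$, and the first branch loses a factor at each stage when the input order is small. No amount of rearranging the two branches fixes this.

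The paper's proof avoids this by \emph{not} reducing to a bare numerical bound on $\Ord$ at each stage. Instead it tracks the \emph{type} of a specific right arc $\eta_n^{\pm\pm}$ through each cabling, using Proposition~\ref{prop:cabling}: the cabling operation preserves the superscript pattern and the noninitial property, so once you know the arc is of type $\eta^{--}$, $\eta^{+-}$, or $\eta^{++}$ (the ``good'' types), each subsequent cable multiplies its length by at least $p_i$, giving $p_1\cdots p_m$ directly. The crucial algebraic input is the $\partial^2=0$ obstruction (Proposition~\ref{prop:d2neq0} and Corollary~\ref{cor:eta1}), which rules out a noninitial $\eta_1^{-+}$ arc; this is exactly what guarantees that when $\Ord(K)=1$ the starting arc is of a good type. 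Your sketch mentions an ``auxiliary wrapping contribution of $p$'' as a separate source for the second branch, but even if such an arc exists, you have no control over its type, so it cannot be fed back into the induction without the same loss recurring.

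In short: the single-cable geometric picture you outline is on the right track and is essentially what the paper does, but the iterated statement requires propagating structural information (arc type), not just the numerical value of $\Ord$, and this in turn rests on the $\partial^2=0$ argument that you have not invoked.
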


As above, when $K$ is a torus knot, or more generally an $L$-space knot, we have the following stronger statement. Note that $\Ord(K)=\Ord(-K)$, since the knot Floer homology of $-K$ is the dual of the knot Floer homology of $K$~\cite[Section 5.1]{Ozsvath-Szabo:2006-1}. 

\begin{proposition}\label{prop:iteratedtorus}
If $K$ is a nontrivial $L$-space knot and $K\neq T_{2,3}$, then $$\Ord(K_{p_1, q_1; p_2, q_2; \dots ; p_m, q_m}) \geq p_1 \dots p_m \left(\Ord(K)+1\right)-1.$$
Moreover, if the bridge index of $K$ is $\Ord(K)+1$, then the above inequality is an equality.
\end{proposition}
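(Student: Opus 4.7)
The plan is to prove the lower bound by refining the immersed-curve argument sketched for Proposition~\ref{prop:main} using the staircase structure of $\CFK$ for an $L$-space knot, and to prove the matching upper bound under the bridge-index hypothesis by combining Schubert's formula for cables with the known bound $\Ord(K)\le b(K)-1$.

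For the lower bound, an $L$-space knot has a staircase knot Floer complex, which in the immersed-curves picture of Hanselman-Rasmussen-Watson translates to a single embedded curve whose meridional-wrapping pattern is dictated by the symmetrized Alexander polynomial and whose longest consecutive meridional run has length $\Ord(K)$. The Hanselman-Watson cabling procedure produces the immersed curve of $K_{p,q}$ by lifting to a $p$-fold cover of the peripheral cylinder and reconnecting the strands via a $q$-twist. For a generic knot, $p-1$ pegs are lost at the reconnection seams, giving the bound of Proposition~\ref{prop:main}; but the staircase shape forces (i) no pegs are lost at the seams, and (ii) an additional block of $p$ consecutive meridional pegs appears adjacent to the principal run, so that $\Ord(K_{p,q})\ge p(\Ord(K)+1)-1$. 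The cable is no longer an $L$-space knot, so iteration requires isolating the local staircase structure near the principal meridional run and verifying that this local structure is preserved under further cabling; induction on the number of cables then yields the full lower bound. The exclusion $K\ne T_{2,3}$ is needed because the trefoil's staircase has only a single step on each side of its center, so the extra block of $p$ pegs cannot be certified to be contiguous with the original run of length one.

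For the equality statement, I would invoke the Schubert multiplicativity of bridge index under cabling,
\[
b(K_{p_1,q_1;\dots;p_m,q_m}) = p_1\cdots p_m\cdot b(K),
\]
together with the inequality $\Ord(K')\le b(K')-1$ valid for any nontrivial knot $K'$ (derivable from the Heegaard diagram induced by a minimal bridge presentation). Under the hypothesis $b(K)=\Ord(K)+1$, these combine to give
\[
\Ord(K_{p_1,q_1;\dots;p_m,q_m}) \le p_1\cdots p_m\,b(K)-1 = p_1\cdots p_m(\Ord(K)+1)-1,
\]
matching the lower bound.

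The main technical obstacle will be step (ii) in the second paragraph: checking that the extra block of $p$ meridional pegs produced by cabling a staircase curve sits adjacent to the original principal run, with the correct relative orientation, and that this adjacency persists under iterated cabling even after the cable ceases to be an $L$-space knot. This requires a careful analysis of the Hanselman-Watson formula near the outermost vertices of the staircase, split into cases according to the sign of $q$ and the orientation of the outermost staircase step; the role of the $L$-space hypothesis is precisely to rigidify these endpoints so that the extra pegs land in a predictable location.
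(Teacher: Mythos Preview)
Your equality argument via Schubert's multiplicativity of bridge index and the Juh\'asz--Miller--Zemke bound $\Ord\le \br-1$ is exactly what the paper does.

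For the lower bound your outline is correct in spirit, but you are making the iteration harder than necessary, and the ``main technical obstacle'' you flag actually disappears once you use the arc-type bookkeeping of Proposition~\ref{prop:cabling}. The paper does not track adjacency of peg blocks or any residual staircase structure after cabling. Instead it observes that for an $L$-space knot $K\neq T_{2,3}$ the staircase contains a \emph{noninitial} right arc of type $\eta_n^{+-}$ with $n=\Ord(K)$, and then invokes Proposition~\ref{prop:cabling}\eqref{it:notessential1c}: a noninitial $\eta_n^{+-}$ is sent under $(p,q)$-cabling to a noninitial $\eta^{+-}$ arc of length $p(n+1)-1$. Since both ``noninitial'' and the superscripts $+-$ are preserved, the same case applies again at every stage, and the bound $p_1\cdots p_m(\Ord(K)+1)-1$ follows by straight iteration---no case split on the sign of $q$, no need for the cable to retain any $L$-space or staircase structure.

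The substantive input you are missing is the reason one can find such a noninitial $\eta^{+-}$ arc of maximal length. This comes from the Hedden--Watson constraint that for any nontrivial $L$-space knot the outermost gaps in the Alexander polynomial satisfy $\alpha_0-\alpha_1=\alpha_{2\ell-1}-\alpha_{2\ell}=1$; in the immersed curve this says the initial right arc always has length $1$. For $T_{2,3}$ this is the \emph{only} right arc, which is why the hypothesis $K\neq T_{2,3}$ is needed; for any other $L$-space knot there is at least one further right arc, it is automatically noninitial of type $\eta^{+-}$ by the staircase shape, and one of these realizes $\Ord(K)$. Your description of the trefoil obstruction is morally this, but the operative distinction is initial versus noninitial arc, not contiguity of peg blocks.
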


\subsection{More applications}\label{subsec:applications}

As mentioned earlier, the torsion order of a knot has many topological applications. We take advantage of this fact and further explore the consequences of Proposition~\ref{prop:main} and Proposition~\ref{prop:iteratedtorus}.  Throughout the article, we will always work in the smooth category.

A knot is \emph{slice} if it bounds a smoothly embedded disk in $B^4$. Note that the $(p,1)$-cable of a slice knot is also slice since there is a disk for $K_{p,1}$ constructed by taking $p$ parallel copies of a disk bounded by $K$ and attaching $p-1$ half-twisted bands to them. In this construction, each parallel copy of a disk for $K$ gives at least one local minima for the resulting disk for $K_{p,1}$. Surprisingly, we show that \emph{any} disk bounded by $K_{p,1}$ has at least $p+1$ local minima given that $K$ is a nontrivial slice knot. This is done by combining Proposition~\ref{prop:main} with a bound of Juh\'{a}sz-Miller-Zemke~\cite[Corollary 1.7]{Juhasz-Miller-Zemke:2020-1}.

\begin{corollary}\label{cor:fusionnumber}
If $K$ is a nontrivial slice knot, then any slice disk bounded by $K_{p_1, 1; p_2, 1; \dots ; p_m, 1}$ has at least $p_1 p_2 \dots p_m+1$ local minima with respect to the radial function on $B^4$ restricted to the disk.\end{corollary}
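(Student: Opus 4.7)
The plan is to combine the Juh\'asz--Miller--Zemke lower bound on the number of local minima of a slice disk (\cite[Corollary 1.7]{Juhasz-Miller-Zemke:2020-1}) with the new lower bound on the torsion order of iterated cables from Proposition~\ref{prop:main}.

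First, I would observe that $J := K_{p_1, 1; p_2, 1; \dots ; p_m, 1}$ is itself slice: sliceness is preserved under each $(p_i, 1)$-cabling via the parallel-copies-plus-bands construction recalled in the paragraph preceding the corollary, and iterating produces a slice disk for $J$. In particular, the statement of the corollary is meaningful.

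Next, I would apply \cite[Corollary 1.7]{Juhasz-Miller-Zemke:2020-1} to an arbitrary slice disk $D \subset B^4$ bounded by $J$: the number of index-$0$ critical points of the radial Morse function restricted to $D$ is at least $\Ord(J) + 1$. Thus it suffices to prove the torsion order bound $\Ord(J) \geq p_1 p_2 \cdots p_m$.

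This last inequality is exactly what Proposition~\ref{prop:main} delivers, with each $q_i = 1$: since $K$ is nontrivial, its knot Floer homology has nonzero $U$-torsion, so $\Ord(K) \geq 1$, and
\[
\Ord(J) \;\geq\; \max\bigl\{ p_1 \cdots p_m\bigl(\Ord(K)-1\bigr) + 1,\; p_1 \cdots p_m \bigr\} \;\geq\; p_1 p_2 \cdots p_m.
\]
The main conceptual work has already been done upstream in establishing Proposition~\ref{prop:main}; the corollary itself is then a short combination with the JMZ bound. The only technical point is to correctly align the ``$+1$'' in the JMZ bound with the nontriviality hypothesis on $K$, which is what ensures that the second branch of the maximum in Proposition~\ref{prop:main} already suffices (one does not need $\Ord(K) \geq 2$), so the argument works uniformly for every nontrivial slice knot.
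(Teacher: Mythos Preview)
Your proof is correct and follows essentially the same route as the paper: apply Proposition~\ref{prop:main} to obtain $\Ord(J) \geq p_1\cdots p_m$, then invoke \cite[Corollary 1.7]{Juhasz-Miller-Zemke:2020-1} to convert this into the lower bound on local minima. Your write-up is in fact slightly more explicit than the paper's about where the ``$+1$'' comes from in the Juh\'asz--Miller--Zemke bound, and about why $J$ is slice, but the argument is the same.
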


We make a remark that it was previously shown in~\cite[Theorem 1.1]{Hom-Kang-Park:2020-1} that if a nontrivial knot $K$ bounds a disk with 2 local minima, then any disk bounded by $K_{p, 1}$ has at least has $p+1$ local minima (in fact, they also show that there is a disk for $K_{p, 1}$ with $p+1$ local minima).   

We also provide applications  to concordance.  Two knots $K$ and $J$ are \emph{concordant} if $K\# -J$ is slice, and the set of concordance classes of knots form a group called the \emph{knot concordance group} $\mathcal{C}$ under the operation of connected sum. Every element in $\mathcal{C}$ is represented by infinitely many knots, and it is natural to ask which knot is the simplest representative of the class. There are many natural ways to measure the complexity of a knot, and it turns out that the torus knots, and more generally algebraic knots, those that are one component links of isolated singularities of complex algebraic plane curves, have the minimal complexity among all the knots that are concordant to them with respect to various geometric knot invariants.  

For instance, the resolution of the local Thom conjecture by Kronheimer-Mrowka~\cite[Corollary 1.3]{Kronheimer-Mrowka:1993-1} implies that each algebraic knot minimizes both the 3-genus and the unknotting number in this sense. Moreover, Juh\'{a}sz-Miller-Zemke~\cite[Corollary 1.10]{Juhasz-Miller-Zemke:2020-1} proved that this is also the case for torus knots with respect to the braid index and the bridge index (see also~\cite[Theorem 1.3]{Feller-Krcatovich:2017-1}). This was done by finding a lower bound for the braid index and the bridge index in terms of the torsion order~\cite[Corollary 1.8]{Juhasz-Miller-Zemke:2020-1}. Using Propostion~\ref{prop:iteratedtorus}, we extend their result to a large family of iterated torus knots including all algebraic knots. For a knot $K$, its bridge index is denoted by $\br(K)$ and its braid index is denoted by $\b(K)$. Again, since $\b(K)=\b(-K)$ and $\br(K)=\br(-K)$, we assume that  $T_{p_0,q_0}$ is a positive torus knot.

\begin{corollary}\label{cor:bridgenumber}
Let $K$ be either an algebraic knot or an iterated torus knot $T_{p_0, q_0; p_1, q_1; \dots ; p_m, q_m}$ where $T_{p_0, q_0} \neq T_{2,3}$. If $J$ is concordant to $K$, then $\b(J) \geq \b(K)$ and $\br(J) \geq \br(K)$.
\end{corollary}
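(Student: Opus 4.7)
The plan is to combine Proposition~\ref{prop:iteratedtorus} with the Juh\'asz-Miller-Zemke lower bounds on bridge and braid index in terms of the torsion order, then upgrade via the concordance argument that they use for torus knots. The key preliminary step is to establish the equality $\br(K) = \b(K) = \Ord(K) + 1$ for every $K$ listed in the statement.

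First, I would verify this equality. Since $T_{p_0, q_0}$ is a nontrivial $L$-space knot with $T_{p_0, q_0} \neq T_{2,3}$, Proposition~\ref{prop:iteratedtorus} applies. For a positive torus knot with $1 < p_0 < q_0$ one has $\br(T_{p_0, q_0}) = p_0$ and $\Ord(T_{p_0, q_0}) = p_0 - 1$ (i.e., the Juh\'asz-Miller-Zemke bound is sharp on torus knots), so the equality hypothesis of Proposition~\ref{prop:iteratedtorus} is met, yielding
$$\Ord(K) = p_1 p_2 \cdots p_m \bigl(\Ord(T_{p_0, q_0}) + 1\bigr) - 1 = p_0 p_1 \cdots p_m - 1.$$
On the other hand, the classical recursive formulas for the bridge and braid indices of cables, applied with the positive cabling parameters present in all iterated torus knots and algebraic knots in the statement, give $\br(K) = \b(K) = p_0 p_1 \cdots p_m$. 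Combining these shows $\br(K) = \b(K) = \Ord(K) + 1$.

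Second, I would invoke the concordance framework of Juh\'asz-Miller-Zemke~\cite[Corollaries 1.8 and 1.10]{Juhasz-Miller-Zemke:2020-1}: using the action of concordance cobordism maps on $\HFKm$, they establish that whenever $J$ is concordant to $K$ one has $\br(J) \geq \Ord(K) + 1$ and $\b(J) \geq \Ord(K) + 1$. Combined with the equality from the previous paragraph, this immediately gives $\br(J) \geq \br(K)$ and $\b(J) \geq \b(K)$.

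The bulk of the substance lies in establishing the equality $\br(K) = \Ord(K) + 1$, which is essentially the content of Proposition~\ref{prop:iteratedtorus}. The concordance step is a direct transcription of the Juh\'asz-Miller-Zemke torus-knot argument to the iterated-torus and algebraic setting and offers no new obstacles, beyond checking that the sharpness hypothesis of Proposition~\ref{prop:iteratedtorus} does indeed apply.
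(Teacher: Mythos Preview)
There are two genuine gaps.

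\textbf{The concordance step fails as stated.} You assert that \cite[Corollaries~1.8 and 1.10]{Juhasz-Miller-Zemke:2020-1} yield $\br(J) \geq \Ord(K) + 1$ whenever $J$ is concordant to $K$. This is false as a general principle: the torsion order is not a concordance invariant. For instance, any nontrivial $2$-bridge slice knot $K$ satisfies $\br(K) = \Ord(K) + 1 = 2$, yet is concordant to the unknot $J$ with $\br(J) = 1$. What \cite[Corollary~1.8]{Juhasz-Miller-Zemke:2020-1} actually gives is $\br(J) > \Ord(J)$, and one must separately show $\Ord(J) \geq p_0 \cdots p_m - 1$. The paper does this by appealing to the refined form proved in the body (Proposition~\ref{prop:iteratedtorusmainbody}): the right arc of length $p_0 p_1 \cdots p_m - 1$ lies in the distinguished component $\gamma_0$, and $\gamma_0$ is itself a concordance invariant \cite[Proposition~2]{HW-cables}. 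Hence that arc is present in the immersed curve for $J$ as well, forcing $\Ord(J) \geq p_0 p_1 \cdots p_m - 1$. Your use of the introductory Proposition~\ref{prop:iteratedtorus}, which records only $\Ord(K)$ without locating the arc in $\gamma_0$, discards exactly the information needed to pass to $J$.

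\textbf{The algebraic case is not covered.} The hypothesis $T_{p_0,q_0} \neq T_{2,3}$ in the corollary attaches only to the iterated-torus clause; algebraic knots can have $T_{2,3}$ as their innermost torus knot (for instance $T_{2,3;2,13}$), and Proposition~\ref{prop:iteratedtorus} does not apply to those. The paper handles algebraic knots by a separate direct argument: since every algebraic knot is an $L$-space knot, one reads off from its Alexander polynomial that $\alpha_1 - \alpha_2 = p_0 p_1 \cdots p_m - 1$, which again places the required long arc inside $\gamma_0$.
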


We believe that the statement should also be true even when $T_{p_0, q_0} = T_{2,3}$, though one would need to use a different method than ours to prove this since there are examples where the torsion order does not give an optimal bound for the bridge number (e.g.,\ $\Ord(T_{2,3; 2,3})=2$ whereas $\br(T_{2,3; 2,3})-1=3$).

We provide one more 4-dimensional application.  Recall that the \emph{4-genus} of a knot $K$, denoted by $g_4(K)$, is defined to be the minimal genus of an oriented smooth surface properly embedded in $B^4$ bounded by $K$. It is well known that the 4-genus gives a lower bound for the unknotting number, and the two invariants coincide for the closures of positive braids~\cite{Stoimenow:2003-1}. Hence for a large family of iterated torus knots, including algebraic knots, the 4-genus is equal to the unknotting number. This is not true for all iterated torus knots. In fact, we show the following.

\begin{corollary}\label{cor:genusunknotting}
There exists a family of iterated torus knots $\{K_i\}$, such that the gap between $u(K_i)$ and $g_4(K_i)$ is arbitrarily large.\end{corollary}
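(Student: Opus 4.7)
The plan is to exhibit a family $\{K_i\}$ of iterated torus knots for which Theorem~\ref{thm:iteratedtorus} gives a lower bound on $u(K_i)$ that grows strictly faster than any achievable upper bound on $g_4(K_i)$.

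First I would fix a base torus knot $T_{p_0,q_0}$ with $1<p_0<q_0$ and $q_0\neq 3$ so that Theorem~\ref{thm:iteratedtorus} applies, and then iteratively cable by patterns $(p_j,q_j)$ with $p_j\geq 2$ so that the product $P_i:=p_0p_1\cdots p_{m_i}$ tends to infinity with $i$. Theorem~\ref{thm:iteratedtorus} then yields
\[
u(K_i)\;\geq\;P_i-1\longrightarrow\infty.
\]

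Second, I would control $g_4(K_i)$ from above. The natural tool is the cabling formula for the $\tau$-invariant due to Hom: by choosing the $q_j$'s to be negative with magnitudes calibrated against the running $\tau$-value, one can force $\tau(K_i)$ to remain bounded (or even zero) while $P_i$ grows exponentially. Combined with smooth surfaces in $B^4$ bounded by $K_i$ whose genus realizes (or nearly realizes) this small $\tau$-invariant, and with Tristram--Levine signatures computed via Litherland's formula for iterated cables providing matching lower bounds, this would give an upper bound $g_4(K_i)=o(P_i)$, from which $u(K_i)-g_4(K_i)\to\infty$ follows immediately.

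The main obstacle is precisely this upper bound on $g_4(K_i)$. The naive construction of a bounding surface in $B^4$ (namely $p_{m_i}$ parallel copies of a Seifert surface for the preceding iterated cable, joined by $p_{m_i}-1$ bands) yields a bound on $g_4(K_i)$ of order $P_i$, matching Theorem~\ref{thm:iteratedtorus} up to an additive constant and failing to produce any growing gap. The real work will be to build smaller-genus surfaces for specific choices of $(p_j,q_j)$ by exploiting the cancellations that appear for negative cabling parameters, or equivalently to identify $K_i$ up to concordance with a simpler knot of smaller $4$-genus; once this is done, the conclusion is immediate from Theorem~\ref{thm:iteratedtorus}.
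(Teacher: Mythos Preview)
Your strategy matches the paper's: bound $u(K_i)$ below by the product $P_i$ of longitudinal windings via Theorem~\ref{thm:iteratedtorus}, and arrange the cabling parameters so that $g_4(K_i)$ grows more slowly. You have also correctly located the only nontrivial step: small $\tau$ (or small signatures) is merely a \emph{lower} bound on $g_4$, so the entire content of the argument lies in actually exhibiting a small-genus surface in $B^4$. Your proposal stops exactly at that point; the remark that ``the real work will be to build smaller-genus surfaces'' is accurate, but it means the proof is not yet a proof.

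The paper resolves this with a single concrete example rather than a general mechanism. Take $K=T_{2,-5;2,5}$ (a negative first cabling parameter, in line with your intuition). The genus~$6$ Seifert surface for $K$, built from two parallel copies of the genus~$2$ Seifert surface for $T_{2,-5}$ joined by five half-twisted bands, visibly contains the slice knot $T_{2,5}\#T_{2,-5}$ bounding a genus~$4$ subsurface; surgering along a slice disk for it yields a genus~$2$ surface for $K$ in $B^4$. The matching lower bound $g_4(K)=2$ comes from $\nu^+$ together with Wu's cabling formula for $\nu^+$, not from $\tau$ or signatures. Then $K_i:=K_{i,1}$ has $g_4(K_i)=2i$ (parallel copies of the genus~$2$ surface for the upper bound; Sato's inequality for $\nu^+$ for the lower bound), while after mirroring Theorem~\ref{thm:iteratedtorus} gives $u(K_i)\geq 2\cdot 2\cdot i-1=4i-1$, so $u(K_i)-g_4(K_i)\geq 2i-1$.

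In short, your outline is correct but incomplete; the missing ingredient is precisely the geometric cancellation you anticipated, realized in the paper by a slice knot sitting on the Seifert surface rather than by an algebraic manipulation of $\tau$.
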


Lastly, we give an application to the untwisting number, a generalization of the unknotting number. A \emph{null-homologous twist} is an operation on a knot, where we perform $1$ or $-1$ surgery on an unknotted curve that is null-homolgous in the knot complement. The \emph{untwisting number} of a knot $K$, denoted by $u_t(K)$, is defined to be the minimum number of null-homologous twists requried to convert $K$ to the unknot (for related work see e.g.\ \cite{Mathieu-Domergue:1988-1, Ince:2016-1, Ince:2017-1, Livingston:2021-1, McCoy:2021-1, McCoy:2021-2}). By definition, it is clear that for any nontrivial knot $K$, we have $$u(K) \geq u_t(K) \geq 1.$$ In \cite[Theorem 1.3]{Ince:2016-1}, it was shown that if $K$ is an unknotting number one knot with $\tau(K) >0$, where $\tau$ is the concordance invariant of \cite{Ozsvath-Szabo:2003-1}, then $u(K_{p,1}) \geq p$ and $u_t(K_{p,1}) =1$. The fact that $\tau$ gives a lower bound for the unknotting number~\cite{Ozsvath-Szabo:2003-1} and a cabling formula for $\tau$ of~\cite[Theorem 1]{Hom:2014-1b} imply that $u(K_{p,1}) \geq p$. The equality $u_t(K_{p,1}) =1$ follows easily and is left to the reader. Theorem~\ref{thm:main} directly implies that the above result generalizes to all unknotting number one knots:

\begin{corollary}\label{cor:untwisting}
If $K$ is an unknotting number one knot, then $u(K_{p,1}) \geq p$ and $u_t(K_{p,1}) =1$.\qed \end{corollary}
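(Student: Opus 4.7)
The plan is to derive each of the two assertions from an ingredient already in hand.

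For the inequality $u(K_{p,1}) \geq p$, I would simply observe that $u(K) = 1$ forces $K$ to be nontrivial, and then invoke Theorem~\ref{thm:main} applied to the $(p,1)$-cable.

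For the equality $u_t(K_{p,1}) = 1$, the lower bound is automatic, since $K_{p,1}$ is nontrivial (by the first part, $u(K_{p,1}) \geq p \geq 2$). For the upper bound, I would reproduce the argument of \cite[Theorem 1.3]{Ince:2016-1}, which establishes $u_t(K_{p,1}) \leq 1$ assuming only $u(K) = 1$ and never uses the additional hypothesis $\tau(K) > 0$ imposed there, so it transfers verbatim. In outline: the unknotting crossing change on $K$ is realized by $\pm 1$-surgery on an unknotted circle $C$ which we choose to lie in $S^3 \setminus N(K)$, so $C$ is disjoint from $K_{p,1} \subset N(K)$. The surgery converts $K$ to the unknot $U$, carrying the solid torus $N(K)$ to a tubular neighborhood of $U$ equipped with the framing induced from the Seifert framing of $K$; hence it sends $K_{p,1}$ to the corresponding $(p,1)$-curve on the boundary of that neighborhood, which is the unknot.

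The only technical point is to ensure that $C$ can be taken null-homologous in $S^3 \setminus K_{p,1}$ and that the induced framing on the new solid torus coincides with the Seifert framing of $U$. Both conditions are automatic when the unknotting crossing for $K$ is antiparallel, since then $\mathrm{lk}(C,K) = 0$ and the Seifert framing is preserved by the surgery. In the parallel case one has $\mathrm{lk}(C,K) = \pm 2$, and the conditions are arranged by first modifying $C$ via the standard device of banding with suitably oriented meridians of $K$ to cancel the nonzero linking without changing the surgery effect on $K$. This banding step is precisely what Ince leaves to the reader and is the only nonformal ingredient in the argument; everything else is automatic from Theorem~\ref{thm:main}.
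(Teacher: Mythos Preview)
Your high-level strategy matches the paper exactly: the paper's entire proof is the sentence ``Theorem~\ref{thm:main} directly implies that the above result generalizes to all unknotting number one knots,'' together with the earlier remark that ``the equality $u_t(K_{p,1})=1$ follows easily and is left to the reader.'' So invoking Theorem~\ref{thm:main} for $u(K_{p,1})\geq p$ and appealing to the Ince-style construction for $u_t(K_{p,1})=1$ is precisely what is intended.

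Where your write-up runs into trouble is the banding step in the parallel case. Banding the crossing circle $C$ with meridians of $K$ does kill the linking number, but it does \emph{not} leave the surgery effect on $K$ unchanged. Concretely, if $m$ is a meridian of one of the two strands and $C'=C\#_b m$, then $C'$ bounds the disk $D\cup b\cup\Delta$ meeting $K$ in three points rather than two, and a $\pm 1$-twist along this disk is a full twist on three strands, not the original crossing change. In fact, if the meridian is taken adjacent to where strand~1 pierces $D$, the positive and negative passages of strand~1 cancel under isotopy and $C'$ becomes a meridian of strand~2 alone; surgery on that is just a Reidemeister~I move and certainly does not unknot $K$. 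So the assertion ``without changing the surgery effect on $K$'' is false as stated.

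A cleaner route, and likely what the paper has in mind given its blanket assertion that $u(K)\geq u_t(K)$, is to first use $u_t(K)\leq u(K)=1$ to obtain an unknotted circle $C'$ already satisfying $\operatorname{lk}(C',K)=0$ whose $\pm1$-surgery unknots $K$. Then $C'$ lies in $S^3\setminus N(K)$, has $\operatorname{lk}(C',K_{p,1})=p\cdot 0=0$, and since $\operatorname{lk}(C',K)=0$ the Seifert framing of $K$ is preserved under the surgery; hence $K_{p,1}$ is carried to $U_{p,1}$, the unknot. This avoids the parallel/antiparallel case split entirely.
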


\subsection{Some open problems}\label{subsec:questions}
We end this section with a few open problems on the unknotting number. Given a nontrivial knot $K$, one easily obtains a crude upper bound on $u(K_{p,q})$ as follows. First, choose an unknotting sequence for $K$.  For each crossing change for $K$, perform the corresponding $p^2$ crossing changes on $K_{p,q}$. Then the resulting knot is a torus knot $T_{p,q'}$  where $q'$ depends on the unknotting number of $K$ and also the sign of the crossing changes that were performed to unknot $K$. By further performing the minimal number of crossing changes needed to turn $T_{p,q'}$ into the unknot,  we obtain a bound.


For instance, if $K$ is a knot that can be turned into the unknot by changing a positive crossing to a negative crossing (e.g.,\ $T_{2,3}$), then this crude bound tells us that $u(K_{2,1}) \leq 5$. Similarly, if $K$ turns into the unknot by changing a negative crossing to a positive crossing, then we have $u(K_{2,1}) \leq 6$. Unfortunately, in either case these bounds are far from sharp. For instance, it is a straightforward exercise to find a sequence of crossing changes for $T_{2,3;2,1}$ and show that $u(T_{2,3;2,1})\leq 3$. Combined with Theorem~\ref{thm:main} (or \cite{Scharlemann-Thompson:1988-1}), we see that $u(T_{2,3;2,1})$ is either 2 or 3. To our surprise, it seems that the unknotting number of $T_{2,3;2,1}$ is not known. We ask the following questions.

\begin{question}\label{qu:questions}Is there a nontrivial knot $K$ where we know the value of $u(K_{2,1})$? More generally, is there a formula for the unknotting number of cabled knots?
\end{question}

Finally, we remark that it is likely that the ideas in this paper can be generalized to other satellite operations.  In particular, work of Chen~\cite{chen2019knot} describes the knot Floer homology of satellite knots with (1,1)-patterns in the solid torus in terms of immersed curves similar to the work of Hanselman-Watson~\cite{HW-cables} for cables used here.  It would be interesting to apply similar techniques to obtain bounds on the unknotting number of such satellite knots.  Furthermore, it is natural to ask how the winding number of a pattern governs the unknotting number of a satellite knot.  In line with Scharlemann-Thompson's result that a nontrivial satellite of nonzero winding number has unknotting number at least 2 \cite{Scharlemann-Thompson:1988-1}, we pose     
\begin{conjecture}\label{conj:winding}
Suppose that $K$ is a satellite of a nontrivial knot, where the pattern has winding number $p$.  Then, $u(K) \geq p+1$.
\end{conjecture}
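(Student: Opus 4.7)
The natural plan is to mirror the paper's strategy by reducing to a torsion-order estimate. By the Alishahi--Eftekhary inequality $u(K) \geq \Ord(K)$, it suffices to establish $\Ord(K) \geq p+1$ for any satellite $K$ with nontrivial companion $J$ and pattern $P$ of winding number $p$. Writing the knot Floer complex of $K$ as $\CFA(P) \boxtimes \CFD(J)$, the heuristic is that $\Ord$ measures the maximum number of consecutive meridional wraps of the associated immersed curve, so two contributions should combine: the $p$ longitudinal strands of $P$ produce a factor of $p$ (as in the cable case), while the nontriviality of $J$ contributes an additional $+1$, coming from the fact that for nontrivial $J$ the unstable chain in $\CFD(J)$ has length at least three, or equivalently, the immersed curve for $J$ makes at least one full traversal of the meridian.

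For the subfamily of \emph{(1,1)-patterns}, Chen's immersed-curve description~\cite{chen2019knot} is a direct analogue of Hanselman--Watson for cables, and I would first attempt the argument in this setting. Concretely, the plan is to pair the (1,1)-pattern curve with the companion curve of Hanselman--Rasmussen--Watson~\cite{HRW, HRW:2018} and track how the $p$-fold longitudinal traversal of the pattern curve multiplies the meridional wrap of the companion, verifying geometrically that a run of at least $p+1$ consecutive meridional edges survives. This already covers cables and all two-bridge patterns and seems like the cleanest first target.

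The central obstacle is extending to an arbitrary pattern, for which no immersed-curve model is currently available. One avenue is a bordered-algebraic argument: extract a lower bound on the torsion order of $\CFA(P) \boxtimes \CFD(J)$ directly from the number of longitudinal idempotents appearing in $\CFA(P)$ and the length of the unstable chain in $\CFD(J)$, without passing through immersed curves. A complementary route is a geometric sharpening of Scharlemann--Thompson~\cite{Scharlemann-Thompson:1988-1}, whose essential-surface argument yields only $u(K) \geq 2$; carefully tracking the pattern's winding number through an unknotting sequence might promote this to $p+1$. The decisive difficulty in either approach is precisely the ``$+1$'': Proposition~\ref{prop:main} already saturates at $p$ when $\Ord(\text{companion}) = 1$ (e.g., when the companion is $T_{2,3}$, the first open case being whether $u(T_{2,3;2,1}) = 3$), so capturing the extra meridional wrap contributed by the mere nontriviality of the companion appears to demand a genuinely new input beyond the methods developed here.
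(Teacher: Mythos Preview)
The statement you are addressing is a \emph{conjecture}, not a theorem; the paper does not prove it. Immediately after stating Conjecture~\ref{conj:winding}, the authors give only partial evidence: a corollary showing that for any fixed pattern $P$ there is a constant $C_P$ such that $u(P(K))\geq p+1$ once $\Ord(K)\geq C_P$. That argument is elementary---relate $P$ to the $(p,1)$-cable by a bounded number of crossing changes in the solid torus and invoke Proposition~\ref{prop:main}---and it explicitly does \emph{not} handle companions of small torsion order.

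Your write-up is not a proof but a research outline, and you appear to recognize this. You correctly isolate the obstruction: the torsion-order approach via Proposition~\ref{prop:main} saturates at $p$ when $\Ord(\text{companion})=1$, so the ``$+1$'' requires a genuinely new idea. The paper's own remarks agree with you here; indeed the authors note that already for $T_{2,3;2,1}$ the unknotting number is unknown (Question~\ref{qu:questions}), which is exactly the first open instance of the conjecture. So there is no gap to identify in your argument relative to the paper's proof, because no such proof exists; your assessment of the difficulty is accurate and consistent with the paper's own discussion.
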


Using Theorem~\ref{thm:main}, we are able to give further evidence for Conjecture~\ref{conj:winding} being true using Theorem~\ref{thm:main}.  An analogous statement for the Levine-Tristram signature~\cite{Levine:1969-1,Tristram:1969-1} can be proved using~\cite[Theorem 1]{Litherland:1984-1}. 

\begin{corollary}
Let $P$ be a nontrivial pattern with winding number $p \geq 1$.  There exists a constant $C_P$ such that for any companion $K$ with $\Ord(K) \geq C_P$, we have $u(P(K)) \geq p+1$.  
\end{corollary}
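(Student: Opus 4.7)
The plan is to bound $u(P(K))$ from below by comparing $P(K)$ with a cable $K_{p,q}$ via crossing changes confined to the solid torus, then invoke Proposition~\ref{prop:main} together with the Alishahi-Eftekhary bound $u \geq \Ord$.

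First I would exhibit a cable pattern $C_{p,q}$ (for some $q$ depending on $P$) and a constant $c_P$, depending only on $P$, such that $P$ is converted into $C_{p,q}$ by $c_P$ crossing changes performed inside the solid torus $V = S^1 \times D^2$. This rests on the classical fact that two knots in a 3-manifold are related by a finite sequence of crossing changes if and only if they are freely homotopic; since $\pi_1(V) = \mathbb{Z}$, any two patterns with the same winding number $p$ are freely homotopic in $V$ and hence crossing-change equivalent inside $V$.

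Realizing these crossing changes inside a tubular neighborhood of $K$ converts $P(K)$ into $K_{p,q}$ via $c_P$ crossing changes in $S^3$, so
$$u(K_{p,q}) \leq u(P(K)) + c_P.$$
Applying Proposition~\ref{prop:main} to the single cable yields $\Ord(K_{p,q}) \geq p(\Ord(K) - 1) + 1$, which combined with Alishahi-Eftekhary gives
$$u(P(K)) \;\geq\; u(K_{p,q}) - c_P \;\geq\; p\cdot\Ord(K) - p + 1 - c_P.$$
The right-hand side is at least $p+1$ whenever $\Ord(K) \geq 2 + c_P/p$, so setting $C_P := \lceil 2 + c_P/p \rceil$ (a quantity depending only on $P$) completes the argument.

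The main obstacle is the first step: producing $c_P$ from $P$ alone. The homotopy-equals-crossing-change-equivalence statement is classical, but some care is required to arrange all the crossing changes strictly inside $V$, and to pin down the resulting framing so that the endpoint of the sequence is genuinely a cable pattern $C_{p,q}$ rather than some other pattern of winding number $p$. Once this is in place, the remaining chain of inequalities is immediate from the tools already developed in the paper.
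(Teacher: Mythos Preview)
Your proposal is correct and follows essentially the same route as the paper's proof: crossing changes in the solid torus relate $P$ to a cable pattern, giving $u(K_{p,q}) \leq u(P(K)) + c_P$, and then Proposition~\ref{prop:main} plus the Alishahi--Eftekhary bound finishes it. The paper's version simply fixes $q=1$ and asserts the existence of the crossing-change sequence without further comment; your homotopy justification is fine, and your worry about ``pinning down the framing'' is unnecessary, since in $V=S^1\times D^2$ every $C_{p,q}$ already has winding number $p$ and is freely homotopic to $P$, so you may take any $q$ you like (in particular $q=1$).
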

\begin{proof}
Let $C_{p,1}$ be the $(p,1)$-cabling pattern in the solid torus. Choose a sequence of $c$ crossing changes in the solid torus taking $C_{p,1}$ to $P$.  Therefore, $u(K_{p,1}) \leq u(P(K)) + c$.  Moreover, if $u(K_{p,1}) \geq c + p + 1$, then $u(P(K)) \geq p + 1$.  The result now follows from Proposition~\ref{prop:main}.
\end{proof}

\section*{Organization}
In Section~\ref{sec:background}, we review and develop the relevant connections between knot Floer complexes and the immersed curve machinery of Hanselman-J. Rasmussen-Watson.  In Section~\ref{sec:cables}, we use the immersed curve description of cabling due to Hanselman-Watson to prove Propositions~\ref{prop:main} and \ref{prop:iteratedtorus}. Lastly, in Section~\ref{sec:applications}, we prove the applications of the propositions including Theorems~\ref{thm:main} and \ref{thm:iteratedtorus}.

\section*{Acknowledgements}
 We would like to thank Peter Feller for helpful conversations.

\section{Knot-like complexes}\label{sec:background}
We will assume that the reader is familiar with knot Floer homology \cite{OSknots, Rasmussen-thesis}. We will view our knot Floer complexes as chain complexes over the ring $\F[\U, \V]$ or $\F[\U, \V]/(\U \V)$. See \cite[Section 3]{HomPCMI} for an expository overview. Chain complexes over the latter ring have an interpretation as immersed curves in the punctured torus. The goal of this section is to describe this relationship, which is described in \cite[Section 4]{HRW:2018} (see also \cite{KWZ}), and to prove a technical result obstructing certain types of curves from being realized by such chain complexes.

\subsection{Modules over $\F[\U, \V]/(\U \V)$ and immersed curves}\label{sec:complextocurve}

We will consider the bigraded ring $\F[\U, \V]$, where $\gr(\U) = (-2, 0)$ and $\gr(\V) = (0, -2)$. We refer to the first component of $\gr$ as $\gr_\U$ and the second component as $\gr_\V$.

Every chain complex $C$ over $\F[\U, \V]$ is homotopy equivalent to one that is \emph{reduced}; that is, that $\d x = 0 \mod (\U, \V)$ for all $x \in C$. Without loss of generality, we will assume throughout that our chain complexes are reduced.

\begin{definition}\label{def:knot-like}
A \emph{knot-like complex} $C$ over $\F[\U, \V]$ is a free, finitely generated, bigraded chain complex over $\F[\U, \V]$ such that
\begin{enumerate}
	\item \label{it:knot-like1} $H_*(C/\U) / \V \textup{-torsion} \cong \F[\V]$ where $\gr_\U(1) = 0$,
	\item \label{it:knot-like2}  $H_*(C/\V) / \U \textup{-torsion} \cong \F[\U]$ where $\gr_\V(1) = 0$.
\end{enumerate}
\end{definition}

\begin{remark}
Similarly, we may define a \emph{knot-like complex} $C$ over the ring $\F[\U, \V]/(\U \V)$ (see \cite[Definition 3.1]{DHSTmore}). In the present paper, we work over both rings, as Proposition \ref{prop:d2neq0} below relies on the existence of a nonzero term in the image of $\U\V$.
\end{remark}

Knot-like complexes arise as the knot Floer complexes of knots in $S^3$. That is, if $K$ is a knot in $S^3$, then the knot Floer complex $\CFK(K)$ is a knot-like complex. See, for example, \cite[Section 3.2]{HomPCMI}.

We let $H^-(C)$ denote $H_*(C/\V)$. Note that when $C= \CFK(K)$, we have that $H^-(C) = \HFKm(K)$. The Alexander grading is given by $\frac{1}{2}( \gr_\U - \gr_\V)$.  

\begin{definition}\label{def:torsionorder}
The \emph{torsion order} of a knot-like complex $C$ is
\[ \Ord(C) = \min \{ n \mid \U^n \cdot \Tor_\U H^-(C) = 0 \}. \]
The \emph{torsion order} of a knot $K$ is $\Ord(K) = \Ord(\CFK(K))$, where $\CFK(K)$ is the knot Floer complex of $K$.
\end{definition}

\begin{theorem}[Theorem 1.1 of \cite{AlishahiEftekharyunknotting}]
The torsion order of a knot $K$ provides a lower bound on the unknotting number of $K$:
\begin{equation}\label{eq:unknotting}
u(K) \geq 	\Ord(K).
\end{equation}
\end{theorem}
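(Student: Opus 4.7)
My plan is to show that each crossing change incurs at most one factor of $\U$ in the control of $\U$-torsion, so that an unknotting sequence of length $n$ forces $\U^n$ to annihilate all $\U$-torsion in $\HFKm(K)$.

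The central technical input is a crossing change lemma: if $K$ and $K'$ differ by a single crossing change, then there should be chain maps
\[
f\colon \CFK(K) \to \CFK(K'), \qquad g\colon \CFK(K') \to \CFK(K),
\]
(up to bigrading shifts) satisfying $g\circ f \simeq \U \cdot \id$ and $f\circ g \simeq \U \cdot \id$. I would construct these by realizing a crossing change as $(\pm 1)$-framed surgery on a small unknot $c$ encircling the crossing and extracting the maps from the associated surgery exact triangle for the knot Floer complex; the fact that undoing a crossing change is itself a crossing change is what produces the composition $\U \cdot \id$ rather than the identity. Alternatively, using Zemke's functoriality, I would view $K$ and $K'$ as cobounding an immersed annulus in $S^3\times[0,1]$ with a single transverse double point, resolve the double point in its two possible ways to define $f$ and $g$, and identify the composition as the map of a cobordism whose evaluation contributes multiplication by $\U$.

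With the lemma in hand, I would iterate along an unknotting sequence. If $u(K) = n$ and $K = K_0, K_1, \dots, K_n$ is a minimal unknotting sequence with $K_n$ the unknot, then composing the crossing change maps produces
\[
F\colon \CFK(K) \to \CFK(K_n), \qquad G\colon \CFK(K_n) \to \CFK(K), \qquad G\circ F \simeq \U^n \cdot \id.
\]
Passing to homology and using that $\HFKm(K_n) \cong \F[\U]$ is $\U$-torsion-free: for any $\U$-torsion class $[x] \in \HFKm(K)$, the image $F_\ast[x]$ is a $\U$-torsion element in a torsion-free module, hence zero, so $\U^n \cdot [x] = G_\ast F_\ast[x] = 0$. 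This yields $\Ord(K) \leq n = u(K)$.

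The hard part is the crossing change lemma, specifically pinning down that the composition is \emph{precisely} multiplication by $\U$ (rather than zero, an arbitrary chain homotopy equivalence, or a higher power of $\U$). This requires careful accounting of bigrading shifts and connecting maps in the surgery triangle, or equivalently a careful computation of cobordism invariants in Zemke's framework. Once the right factor of $\U$ is obtained for a single crossing change, the inductive reduction to the torsion-freeness of $\HFKm$ of the unknot is formal.
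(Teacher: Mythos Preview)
The paper does not prove this theorem at all; it is simply quoted as Theorem~1.1 of Alishahi--Eftekhary and used as a black box (see also the reference to Zemke's functoriality in the introduction). So there is no ``paper's own proof'' to compare against.

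That said, your sketch is essentially the argument in the cited source: construct chain maps for a single crossing change whose composition is multiplication by $\U$, iterate along an unknotting sequence of length $u(K)$, and use that $\HFKm$ of the unknot is torsion-free to conclude that $\U^{u(K)}$ annihilates the torsion submodule. You correctly identify the only nontrivial step, namely proving that the composition of the two crossing-change maps is exactly $\U\cdot\id$ up to homotopy; both the skein/surgery-triangle derivation you outline and the Zemke cobordism approach (via an immersed annulus with one double point) are the standard ways to establish this. Your final torsion argument is clean and correct. Nothing is missing from the outline beyond the detailed construction of the crossing-change maps, which is exactly what the references supply.
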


Let $\{z_i\}$ be a basis over $\F[\U, \V]$ for a knot-like complex $C$. Let $\langle \d z_i, z_j \rangle$ denote the coefficient of $z_j$ in $\d z_i$. If $\langle \d z_i, z_j \rangle = \U^n$, then we say that there is a \emph{horizontal arrow of length $n$} from $z_i$ to $z_j$. If $\langle \d z_i, z_j \rangle = \V^n$, then we say that there is a \emph{vertical arrow of length $n$} from $z_i$ to $z_j$. If $\langle \d z_i, z_j \rangle \in \Im (\U \V)$, then we say there is a \emph{diagonal arrow} from $z_i$ to $z_j$. (Observe that $\langle \d z_i, z_j \rangle$ must be a monomial in $\U$ and $\V$ for grading reasons.)

We now consider certain particularly nice bases for knot-like complexes. A basis $\{x_i\}_{i=0}^{2N}$ for a knot-like complex over $\F[\U,\V]$ is called \emph{horizontally simplified} if $\d x_i = 0 \mod (\V)$ for $i$ even and $\d x_i = \U^{n_i} x_{i+1} \mod (\V)$ for $i$ odd.  See Figure~\ref{fig:horizontally-simplified} for a graphical depiction.  



\begin{figure}[htb!]
\begin{center}
\begin{tikzpicture}[scale=1]

	\filldraw (3.5, 3.8) circle (2pt) node[label=right:{\lab{x_0}}] (a) {};
	\filldraw (3.5, 3.5) circle (2pt) node[label=right:{\lab{x_1}}] (b) {};
	\filldraw (0.5, 3.5) circle (2pt) node[label=left :{\lab{\U^3 x_2}}] (c) {};
	\filldraw (3.5, 3.2) circle (2pt) node[label= right:{\lab{x_3}}] (d) {};
	\filldraw (1.5, 3.2) circle (2pt) node[label= left:{\lab{\U^2 x_4}}] (f) {};

	\draw[->] (b) to (c);
	\draw[->] (d) to (f);

\end{tikzpicture}
\caption{A graphical depiction of a horizontally simplified basis modulo $(\V)$.}
\label{fig:horizontally-simplified}
\end{center}
\end{figure}
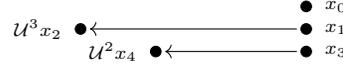

Similarly, we say that a basis $\{y_i\}_{i=0}^{2N}$ for a knot-like complex over $\F[\U,\V]$ is \emph{vertically simplified} if $\d y_i = 0 \mod (\U)$ for $i$ even and $\d y_i = \V^{m_i} y_{i+1} \mod (\U)$ for $i$ odd. 
A horizontally simplified basis and a vertically simplified basis for a knot-like complex over $\F[\U, \V]/(\U \V)$ is defined in a similar way.


\begin{remark}\label{rk:dxixj}
Note that if $\{y_i\}$ is a vertically simplified basis and $j$ is odd, then $\langle \d y_i, y_j \rangle = 0 \mod (\U)$ for all $i$.
\end{remark}

We can always choose a vertically simplified basis for $C$, or, if we prefer, a horizontally simplified basis for $C$; see, for example, \cite[Proposition 11.57]{LOT}. 

In many situations, it suffices to work with chain complexes over $\F[\U, \V]/(\U \V)$ rather than $\F[\U, \V]$. We now describe the relationship between chain complexes over $\F[\U, \V]/(\U \V)$ and immersed curves. Our immersed curves will sit in $(\R / \Z) \times \R$ with punctures at each lattice point $\left(0,n+\frac{1}{2}\right)$. The punctures may be thought of as pegs around which the immersed curves wind. For convenience, we identify $(\R / \Z) \times \R$ with $\left(\left[-\frac{1}{2}, \frac{1}{2}\right]/ -\frac{1}{2} \sim \frac{1}{2}\right) \times \R$ and work with the latter.


Let $\{x_i\}$ be a horizontally simplified basis for a knot-like complex over $\F[\U,\V]/(\U \V)$. Each basis element $x_i$ corresponds to a point on the vertical line $\{ 0 \} \times \R$; we abuse notation and also denote this point by $x_i$. The vertical coordinate of $x_i$ is given by the Alexander grading of $x_i$. A horizontal arrow of length $n_i$ from $x_i$ to $x_{i+1}$, $i$ odd, gives rise to an arc of length $n_i$ from $x_i$ to $x_{i+1}$ to the right of the vertical line $\{ 0 \} \times \R$, where $x_i$ is $n_i$ units below $x_{i+1}$. We call such an arc a \emph{right arc of length $n_i$}. We also include an arc from $x_0$ to $(\frac{1}{2}, 0)$, which we call the \emph{essential right arc}.

Similarly, if $\{y_i\}$ is a vertically simplified basis for a knot-like complex over $\F[\U,\V]/(\U \V)$, each basis element $y_i$ corresponds to a point on the vertical line $\{ 0 \} \times \R$; we again abuse notation and denote the point by $y_i$. The vertical coordinate of $y_i$ is again given by the Alexander grading of $y_i$. A vertical arrow of length $m_i$ from $y_i$ to $y_{i+1}$, $i$ odd, gives rise to an arc of length $m_i$ from $y_i$ to $y_{i+1}$ to the left of the vertical line $\{ 0 \} \times \R$, where now $y_i$ is $m_i$ units above $y_{i+1}$. We call such an arc a \emph{left arc of length $m_i$}. We also include an arc from $y_0$ to $(-\frac{1}{2}, 0)$, which we call the \emph{essential left arc}.  

Thus, given a knot-like complex with a simultaneously horizontally and vertically simplified basis, it is straightforward to find an associated immersed curve, which will be a concatenation of right and left arcs of various lengths, together with the essential arcs. See Figure \ref{fig:compextocurve} for some examples.

\begin{figure}[htb!]
\begin{center}
\begin{tabular}[c]{ccc}
\subfigure[]{
\begin{tikzpicture}[scale=0.7]
	\draw[step=1, black!30!white, very thin] (-2.5, -1.5) grid (4.5, 4.5);

	\filldraw (3.5, 3.5) circle (2pt) node[label=above:{\lab{a}}] (b) {};
	\filldraw (-1.5, 3.5) circle (2pt) node[label=above :{\lab{\U^5 b}}] (c) {};
	\filldraw (2.5, 2.5) circle (2pt) node[label=below left:{\lab{\U \V d}}] (d) {};
	\filldraw (3.5, 0.5) circle (2pt) node[label=right:{\lab{\V^3 f}}] (e) {};
	\filldraw (-1.5, 2.5) circle (2pt) node[label=left:{\lab{\U^5 \V c}}] (f) {};
	\filldraw (2.5, 0.5) circle (2pt) node[label=below:{\lab{\U \V^3 e}}] (g) {};

	\draw[->] (b) to (c);
	\draw[->] (b) to (d);
	\draw[->] (b) to (e);
	\draw[->] (c) to (f);
	\draw[->] (d) to (f);
	\draw[->] (d) to (g);
	\draw[->] (e) to (g);

\end{tikzpicture}
}&
\subfigure[]{
\labellist
	\pinlabel {\lab{b}} at  55 160
	\pinlabel {\lab{c}} at  55 140
	\pinlabel {\lab{a}} at  55 80
	\pinlabel {\lab{d}} at  65 74
	\pinlabel {\lab{e}} at  65 50
	\pinlabel {\lab{f}}  at  55 30
\endlabellist
\includegraphics{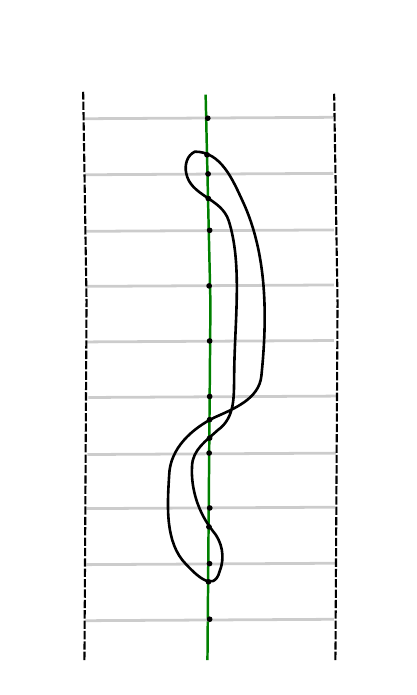}
}\\
\subfigure[]{
\begin{tikzpicture}[scale=0.7]
	\draw[step=1, black!30!white, very thin] (-0.5, -0.5) grid (5.5, 5.5);

	\filldraw (0.5, 4.5) circle (2pt) node[label=above:{\lab{\U^4 a}}] (a) {};
	\filldraw (1.5, 4.5) circle (2pt) node[label=above:{\lab{\U^3 b}}] (b) {};
	\filldraw (1.5, 1.5) circle (2pt) node[label=left:{\lab{\U^3 \V^3 c}}] (c) {};
	\filldraw (4.5, 1.5) circle (2pt) node[label=above:{\lab{\V^3 d}}] (d) {};
	\filldraw (4.5, 0.5) circle (2pt) node[label=right:{\lab{\V^4 e}}] (e) {};

	\draw[->] (b) to (a);
	\draw[->] (b) to (c);
	\draw[->] (d) to (c);
	\draw[->] (d) to (e);

\end{tikzpicture}
}&
\subfigure[]{
\labellist
	\pinlabel {\lab{a}} at  55 160
	\pinlabel {\lab{b}} at  65 140
	\pinlabel {\lab{c}} at  65 95
	\pinlabel {\lab{d}} at  55 47
	\pinlabel {\lab{e}} at  55 28

	\pinlabel {\scriptsize{essential left arc}} at  15 135	
	\pinlabel {\scriptsize{initial right arc}} at  97 150	
	\pinlabel {\scriptsize{essential right arc}} at  110 60	
\endlabellist
\includegraphics{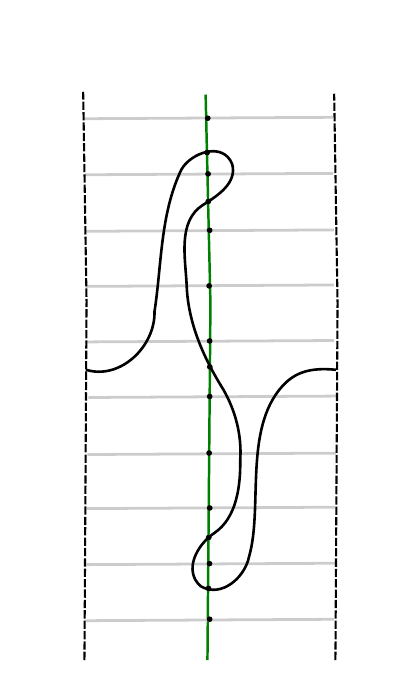}
\label{subfig:immersedcurve2}
}\end{tabular}
\caption{Examples of chain complexes over $\F[\U, \V]$ and their associated immersed curves.}
\label{fig:compextocurve}
\end{center}
\end{figure}

In the absence of a simultaneously horizontally and vertically simplified basis, one obtains an immersed curve together with a nontrivial local system. (Note that there exist knot-like complexes that do not admit a simultaneously horizontally and vertically simplified basis; see, for example, \cite[Figure 3]{Hom-infinite-rank}.) As noted at the end of Section 1.1 of \cite{HRW:2018}, a local system can be expanded by replacing the associated curve with parallel copies together with crossover arrows, which can be confined to a prescribed part of the curve. Note that if the crossover arrows are confined to a right arc, then the corresponding basis is vertically simplified. 

We denote the component of the immersed curve containing the essential right arc (and hence also the essential left arc) $\gamma_0$. As noted on page 2 of \cite{HW-cables}, for an immersed curve associated to the complement of a knot in $S^3$, the component $\gamma_0$ always carries the trivial 1-dimensional local system (otherwise the rank of $\HFhat$ of the meridional filling of $K$ would be greater than one; compare with items \eqref{it:knot-like1} and \eqref{it:knot-like2} of Definition \ref{def:knot-like}). We call the right arc of $\gamma_0$ containing $y_0$ the \emph{initial right arc}. Note that the initial right arc abuts the essential left arc. See Figure \ref{subfig:immersedcurve2}.

Conversely, an immersed curve (possibly with a nontrivial local system) gives a chain complex over $\F[\U, \V]/(\U \V)$, essentially by reversing the above procedure; see \cite[Theorem 2]{KWZ}. If the immersed curve is the bordered invariant associated to the complement of a knot in $S^3$, then the associated chain complex is the mod $(\U \V)$ reduction of a knot-like complex. Each intersection point of the immersed curve with the vertical axis yields a basis element $z_i$, and we have a horizontal (resp. vertical) arrow of length $n$ from $z_i$ to $z_j$ exactly when there is an upwards right arc (resp. downwards left arc) of length $n$ from $z_i$ to $z_j$. Here, we do not give the immersed curve an orientation; we only look locally at an arc and consider right (resp. left) arcs as upwards (resp. downwards).

\begin{remark}\label{rem:gamma0}
Note that under this correspondence between immersed curves and chain complexes, if we restrict to the component $\gamma_0$ (which always carries the trivial 1-dimensional local system), we may assume that our basis is simultaneously vertically and horizontally simplified.
\end{remark}

Suppose one is given a right arc which is either noninitial or initial such that $y_0$ is not contained in $\{0\} \times \left[-\frac{1}{2}, \frac{1}{2}\right]$. Observe that given a right arc of length $n$, there are two options for the behavior at each end of the arc: it may turn up or down. These four possibilities are illustrated in Figure \ref{fig:immersedcurve4cases}, and denoted $\eta_n^{\pm \pm}$, where the first superscript describes the behavior of the top end, the second superscript describes the behavior of the bottom end, and the subscript denotes the length of the arc. (We follow the notation of \cite[proof of Proposition 5]{HW-cables}.) An analogous statement holds for the ends of a left arc.  

\begin{figure}[htb!]
\begin{center}
\labellist
	\pinlabel {$\eta_n^{--}$} at  50 0
	\pinlabel {$\eta_n^{-+}$} at  155 0
	\pinlabel {$\eta_n^{+-}$} at  265 0
	\pinlabel {$\eta_n^{++}$} at  370 0
\endlabellist
\includegraphics{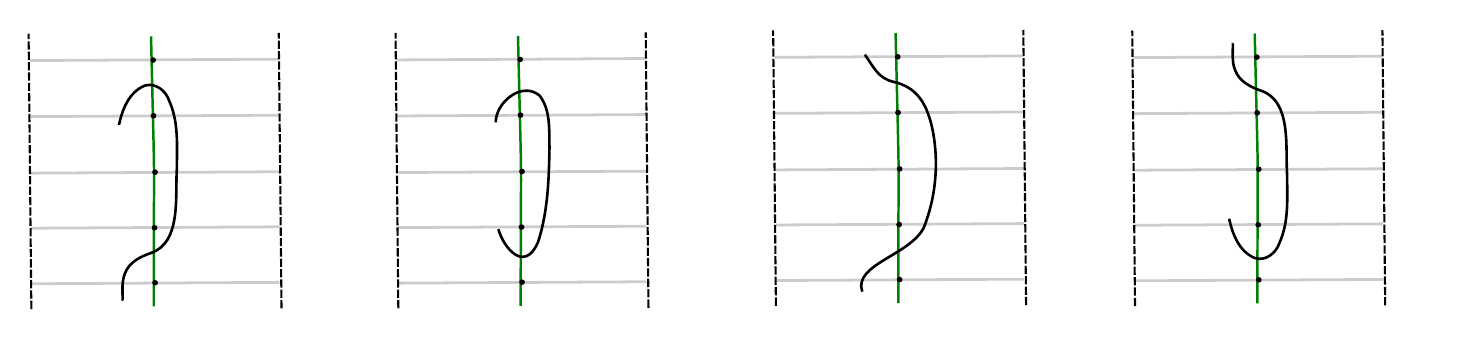}
\caption{The four different types of end behavior of a right arc when the right arc is either noninitial or initial such that $y_0$ is not contained in $\{0\} \times \left[-\frac{1}{2}, \frac{1}{2}\right]$. Here, $n=3$.}
\label{fig:immersedcurve4cases}
\end{center}
\end{figure}


For the initial right arc such that $y_0$ is contained in $\{0\} \times \left[-\frac{1}{2}, \frac{1}{2}\right]$, we have five more cases, denoted by $\eta^{0}$, $\eta_n^{\pm 0}$, and $\eta_n^{ 0 \pm}$ as in Figure~\ref{fig:immersedcurve5cases}. (Note that $\eta^0$ is a degenerate initial right arc.) As before the first superscript describes the behavior of the top end, the second superscript describes the behavior of the bottom end, and the subscript denotes the length of the arc.
\begin{figure}[htb!]
\begin{center}
\labellist
	\pinlabel {$\eta^{0}$} at  47 0
	\pinlabel {$\eta_n^{-0}$} at  155 0
	\pinlabel {$\eta_n^{+0}$} at  265 0
	\pinlabel {$\eta_n^{0+}$} at  370 0
	\pinlabel {$\eta_n^{0-}$} at  475 0
\endlabellist
\includegraphics[width=165mm]{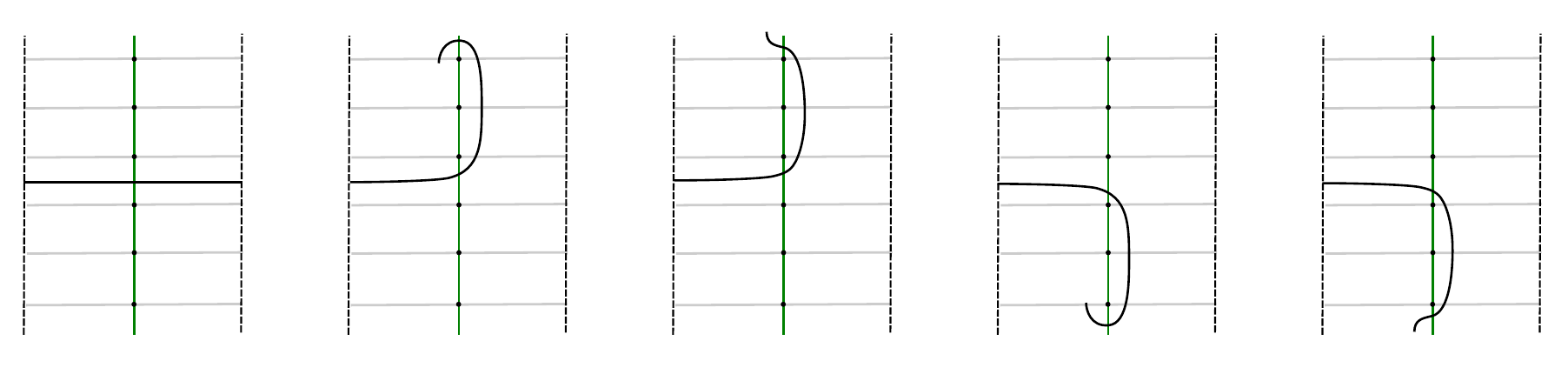}
\caption{The five different types of behavior of the initial right arc such that $y_0$ is contained in $\{0\} \times \left[-\frac{1}{2}, \frac{1}{2}\right]$. Here, $n=3$.}
\label{fig:immersedcurve5cases}
\end{center}
\end{figure}

The following lemma is a consequence of the relationship between a knot-like complex and its associated immersed curve. From now on, we assume that the immersed curves are pulled tight so that for each arc there is at least one peg between two intersection points between the arc and $\{0\} \times \mathbb{R}$. (Note that this is the immersed curve analogue of assuming that a chain complex is reduced.)

\begin{lemma}\label{lem:rightarc}
Let $C$ be a knot-like complex and suppose that $H^-(C) = \F[\U] \oplus \bigoplus_{i=1}^N \F[\U]/(\U^{n_i})$. Then for each $i$, the immersed curve associated to $C$ has a right arc of length $n_i$. Conversely, an immersed curve with a right arc of length $n$ comes from a knot-like complex $C$ with an $\F[\U]/(\U^n)$ summand in $H^-(C)$.
\end{lemma}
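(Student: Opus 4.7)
The plan is to read off both sides of the asserted correspondence from a horizontally simplified basis of $C$. By \cite[Proposition 11.57]{LOT}, $C$ admits a horizontally simplified basis $\{x_i\}_{i=0}^{2N}$. If the associated immersed curve carries a nontrivial local system, I first arrange for all crossover arrows to be confined to left arcs (as recalled before Remark~\ref{rem:gamma0}); the resulting basis remains horizontally simplified and the right-arc combinatorics of the curve are preserved.

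Next, I would compute $H^-(C) = H_*(C/\V)$ directly from this basis. In $C/\V$ the differential satisfies $\d x_{2k} = 0$ and $\d x_{2k-1} = \U^{n_{2k-1}}\, x_{2k}$, so a one-line calculation gives
\[
H^-(C) \;\cong\; \F[\U]\,\langle x_0\rangle \;\oplus\; \bigoplus_{k=1}^{N}\, \F[\U]/(\U^{n_{2k-1}})\,\langle x_{2k}\rangle.
\]
Since elementary divisors over the PID $\F[\U]$ are module invariants, the multiset $\{n_{2k-1}\}_{k=1}^{N}$ agrees, after reordering, with the multiset $\{n_i\}_{i=1}^N$ in the hypothesis. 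Under the dictionary of Section~\ref{sec:complextocurve}, each horizontal arrow of length $n$ is encoded as a right arc of length $n$ in the immersed curve, which yields the forward direction. For the converse I reverse the procedure: a right arc of length $n$ pairs its two endpoints into a horizontal arrow $x_i \to x_{i+1}$ of length $n$ in a horizontally simplified basis, and the same computation contributes an $\F[\U]/(\U^n)$ summand to $H^-(C)$.

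The main subtlety is bookkeeping with local systems: a single right arc in a curve carrying a rank-$k$ local system is really traversed by $k$ parallel strands, so it should contribute $k$ copies of an $\F[\U]/(\U^n)$ summand. Confining crossover arrows to left arcs is precisely what allows each parallel copy of the right arc to translate independently into a horizontal arrow of length $n$, so that the multiplicities on the two sides line up. Finally, the essential right arc (of ``length $0$'') corresponds precisely to the free $\F[\U]$ summand generated by $x_0$ and plays no role in the torsion bookkeeping.
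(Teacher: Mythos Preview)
Your proof is correct and takes essentially the same approach as the paper's: choose a horizontally simplified basis, observe that each horizontal arrow of length $n$ produces both an $\F[\U]/(\U^n)$ summand in $H^-(C)$ and a right arc of length $n$, and run the dictionary both ways. The paper's own proof is a two-sentence appeal to ``the above discussion''; you have simply filled in the explicit homology computation over $\F[\U]$ and added the local-system bookkeeping (confining crossover arrows to left arcs to keep the basis horizontally simplified), which the paper treats implicitly.
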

\begin{proof}
Consider a horizontally simplified basis $\{x_i\}$ for $C$. Then it follows from the above discussion that each pair $\{x_i, x_{i+1}\}$, $i$ odd, where $\d x_i = \U^{n_i} x_{i+1} \mod (\V)$ gives rise to a right arc of length $n_i$. The converse statement follows similarly.
\end{proof}

\subsection{Concordance invariants and immersed curves}\label{sec:tauandepsilon}
The concordance invariants $\tau$ \cite{Ozsvath-Szabo:2003-1} and $\varepsilon$ \cite{Hom:2014-1} have simple interpretations in the immersed curve setting. Consider the component $\gamma_0$ associated to the complement of a knot $K$. Begin at $(-\frac{1}{2}, 0)$ and follow $\gamma_0$ to the right to $y_0$. Then $\tau(K)$ is the floor of the sum of $\frac{1}{2}$ and the vertical coordinate of $y_0$ (i.e.,\ the sum of $\frac{1}{2}$ and the vertical coordinate of the peg right below $y_0$.) If $\gamma_0$ turns downward at $y_0$, then $\varepsilon(K) = 1$. If $\gamma_0$ turns upwards at $y_0$, then $\varepsilon(K) = -1$. If $\gamma_0$ consists of a horizontal line, then $\varepsilon(K) = 0$. For example, in Figure \ref{subfig:immersedcurve2}, we have that $\tau(K) = 4$ and $\varepsilon(K) = 1$. In fact, using \cite[Theorem 1]{Homsurvey} Hanselman-Watson proved that $\gamma_0$ is a concordance invariant~\cite[Proposition 2]{HW-cables}.

\subsection{An obstruction to certain immersed curves}

The following proposition states that if $\{y_i\}$ is a vertically simplified basis for a knot-like complex and $i>0$ is even and $j$ is odd, then there cannot be a horizontal arrow of length one from $y_i$ to $y_j$. The proof relies on the fact that $\d^2 = 0$. See Figure \ref{fig:d2neq0}.

\begin{proposition}\label{prop:d2neq0}
Let $\{y_i\}$ be a vertically simplified basis for a knot-like complex. Then for all even $i > 0$ and odd $j$, we have $\langle \d y_i, y_j \rangle = 0 \mod (\U^2, \V)$.
\end{proposition}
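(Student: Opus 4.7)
The plan is to argue by contradiction using $\d^2 = 0$. Suppose that there exist an even $i > 0$ and an odd $j$ with $\langle \d y_i, y_j \rangle \not\equiv 0 \pmod{(\U^2, \V)}$. The bigrading forces every nonzero coefficient $\langle \d y_s, y_t \rangle$ to be a single monomial $\U^a \V^b$. Applying Remark~\ref{rk:dxixj} with $j$ odd gives $a \geq 1$; the standing assumption then pins down $a = 1$ and $b = 0$, so $\langle \d y_i, y_j \rangle = \U$ exactly.

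Next I would isolate the relevant contribution to $\d^2 y_{i-1}$. Since $i > 0$ is even, the index $i-1$ is odd, so vertical simplification gives $\langle \d y_{i-1}, y_i \rangle = \V^{m_{i-1}}$, and reducedness of the complex (i.e.\ $\d y_{i-1} \in (\U, \V)$) forces $m_{i-1} \geq 1$. Expanding
\[
0 = \langle \d^2 y_{i-1}, y_j \rangle = \sum_k \langle \d y_{i-1}, y_k \rangle \, \langle \d y_k, y_j \rangle,
\]
the $k = i$ summand contributes the monomial $\V^{m_{i-1}} \cdot \U = \U \V^{m_{i-1}}$.

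For every $k \neq i$, both factors of the corresponding product lie in the ideal $(\U)$: vertical simplification gives $\d y_{i-1} \equiv \V^{m_{i-1}} y_i \pmod{\U}$, so the $y_k$-coefficient with $k \neq i$ vanishes mod $\U$; and Remark~\ref{rk:dxixj} gives $\langle \d y_k, y_j \rangle \equiv 0 \pmod{\U}$ because $j$ is odd. Thus each such product lies in $(\U^2)$ and cannot absorb the monomial $\U \V^{m_{i-1}}$. Therefore $\langle \d^2 y_{i-1}, y_j \rangle \neq 0$, contradicting $\d^2 = 0$.

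The only genuine subtlety, and the reason the proposition must be stated over $\F[\U, \V]$ and not $\F[\U, \V]/(\U \V)$, is that the obstructing monomial $\U \V^{m_{i-1}}$ sits in the image of $\U \V$ (invoking $m_{i-1} \geq 1$) and would be killed in the quotient ring; this matches the remark immediately preceding the proposition. Beyond that, the proof is pure monomial bookkeeping enabled by the fact that the bigrading makes each matrix entry a single monomial.
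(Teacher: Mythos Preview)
Your proof is correct and follows essentially the same approach as the paper: assume for contradiction that $\langle \d y_i, y_j \rangle = \U$, then show that the contribution $\U\V^{m_{i-1}}$ to $\langle \d^2 y_{i-1}, y_j \rangle$ coming through $k=i$ cannot be cancelled because every other summand lies in $(\U^2)$. Your observation that the obstructing monomial $\U\V^{m_{i-1}}$ dies in $\F[\U,\V]/(\U\V)$ is exactly the point of the paper's remark preceding the proposition.
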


\begin{proof}
Suppose for the sake of contradiction that $\langle \d y_i, y_j \rangle \neq 0 \mod (\U^2, \V)$ for some even $i > 0$ and odd $j$. Since our basis is assumed to be reduced and since our chain complexes are graded, it follows that $\langle \d y_i, y_j \rangle = \U$.

We claim that $\langle \d^2 y_{i-1}, y_{j} \rangle \neq 0$, contradicting that $\d^2 = 0$. Indeed, 
\[ \d y_{i-1} = \V^{m_{i-1}} y_i +\U \sum_{k \in K} \U^{a_k} \V^{b_k} y_{\ell_k} \]
for some  nonnegative integers $a_k, b_k$ and index set $K$. It follows that 
\begin{align*}
	\langle \d^2 y_{i-1}, y_{j} \rangle &= \langle \V^{m_{i-1}} \d y_i +\U \sum_{k \in K} \U^{a_k} \V^{b_k} \d y_{\ell_k}, y_j \rangle \\
		&= \langle \V^{m_{i-1}} \d y_i, y_j \rangle + \langle \U \sum_{k \in K} \U^{a_k} \V^{b_k} \d y_{\ell_k} , y_j \rangle.
\end{align*}
The first term above on the right hand side is $\V^{m_{i-1}} \U$, but by Remark \ref{rk:dxixj}, the second term is $0 \mod (\U^2)$, completing the proof.
\end{proof}

\begin{figure}[htb!]
\begin{center}
\begin{tikzpicture}[scale=1]
	\filldraw (3.5, 3.5) circle (2pt) node[label=right:{\lab{y_{i-1}}}] (b) {};
	\filldraw (3.5, 0.5) circle (2pt) node[label=right :{\lab{\V^{m_{i-1}} y_i}}] (c) {};
	\filldraw (2.5, 0.5) circle (2pt) node[label= left:{\lab{\U \V^{m_{i-1}}   y_j}}] (d) {};
	\filldraw (2.5, -1.5) circle (2pt) node[label= left:{\lab{\U \V^{m_j+m_{i-1}}  y_{j+1}}}] (f) {};

	\draw[->] (b) to (c);
	\draw[->] (d) to (f);
	\draw[->] (c) to (d);

\end{tikzpicture}
\caption{A graphical depiction of the proof of Proposition \ref{prop:d2neq0}. Since the basis is vertically simplified, there is no other term in $\d^2 y_{i-1}$ that can cancel with $\U \V^{m_{i-1}} y_j$. It is key that the horizontal arrow between $y_i$ and $y_j$ is length one; if it were longer, there could be a sequence of two arrows (each with a nonzero horizontal component) from $y_{i-1}$ to $y_j$ making $\d^2 y_{i-1} = 0$.}
\label{fig:d2neq0}
\end{center}
\end{figure}
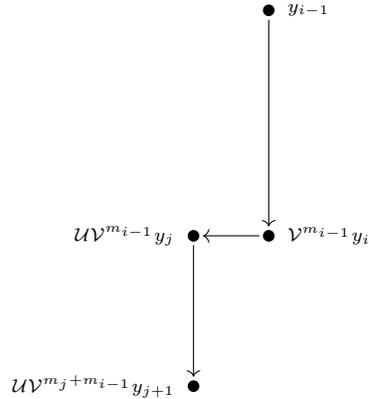

\begin{corollary}\label{cor:eta1}
With notation as in Figure \ref{fig:immersedcurve4cases}, a noninitial right arc of the form $\eta_1^{-+}$ cannot occur.
\end{corollary}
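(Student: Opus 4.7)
My plan is to deduce Corollary~\ref{cor:eta1} directly from Proposition~\ref{prop:d2neq0}. Suppose, for contradiction, that the immersed curve associated to some knot-like complex $C$ contains a noninitial right arc of type $\eta_1^{-+}$. I will proceed in three steps: first, pass to a vertically simplified basis $\{y_i\}$ for $C$ realizing this immersed curve; second, translate the end behaviors of the arc into parity information for the indices at the two endpoints; third, invoke Proposition~\ref{prop:d2neq0} to reach a contradiction.

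For step one, a vertically simplified basis always exists; if the curve carries a nontrivial local system, I would expand it into parallel copies with crossover arrows and confine those crossover arrows to right arcs other than our distinguished length-one arc. This is legal because a length-one arc has no interior room for a crossover arrow to sit on. Under the dictionary of Section~\ref{sec:complextocurve}, our arc then corresponds cleanly to the differential contribution $\langle \partial y_a, y_b \rangle = \U$, where $y_a$ is the basis element at the bottom endpoint and $y_b$ is the one at the top endpoint.

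For step two, I read off parities from the superscripts. The ``$+$'' at the bottom end means that at $y_a$ the curve continues upward into a left arc, so $y_a$ is the lower endpoint of that left arc and hence the target of a vertical arrow; in a vertically simplified basis such targets are indexed by positive even integers, so $a$ is even and positive (noninitiality guarantees $a \neq 0$). Dually, the ``$-$'' at the top end means that at $y_b$ the curve continues downward into a left arc, so $y_b$ is the upper endpoint of that left arc and hence the source of a vertical arrow, forcing $b$ odd. Proposition~\ref{prop:d2neq0} now yields $\langle \partial y_a, y_b \rangle \in (\U^2, \V)$, directly contradicting the equality $\langle \partial y_a, y_b \rangle = \U$ from step one.

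The main obstacle I expect is step one: verifying that after confining all crossover arrows away from our distinguished arc, the resulting contribution to the differential really is the clean monomial $\U y_b$, with no extra higher-order terms that could sneak $\langle \partial y_a, y_b \rangle$ into the ideal $(\U^2, \V)$ and spoil the contradiction. Once this local tidiness is checked, the parity reading and the application of Proposition~\ref{prop:d2neq0} are both immediate.
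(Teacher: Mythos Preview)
Your proposal is correct and follows essentially the same route as the paper's proof: both deduce the corollary directly from Proposition~\ref{prop:d2neq0} by reading off from the $\eta_1^{-+}$ end behavior that the bottom endpoint has positive even index and the top endpoint odd index in a vertically simplified basis, yielding $\langle \partial y_a, y_b\rangle = \U$ in contradiction with the proposition. Your treatment of the local-system issue in step one is in fact more careful than the paper's, which simply appeals to ``the discussion in Section~\ref{sec:complextocurve}''; since Proposition~\ref{prop:d2neq0} applies to any vertically simplified basis and $\langle \partial y_a, y_b\rangle$ is a monomial for grading reasons, the concern you flag does not cause trouble.
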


\begin{proof}
The result follows immediately from Proposition \ref{prop:d2neq0} and the discussion in Section \ref{sec:complextocurve}, translating between a chain complex over $\F[\U,\V]/(\U \V)$ and an immersed curve. Namely, a noninitial  right arc $\eta_1^{-+}$ corresponds to a vertically simplified basis $\{y_i\}$ with a horizontal arrow of length one from $y_i$ to $y_j$, $i > 0$ even and $j$ odd, which by Proposition \ref{prop:d2neq0} cannot occur.  
\end{proof}


We have an analogous result when $\varepsilon = 1$ and the initial right arc of the form $\eta_1^{\pm+}$ and $\eta_1^{0+}$.

\begin{lemma}\label{lem:eta1}
With notation as in Figure \ref{fig:immersedcurve4cases} and Figure~\ref{fig:immersedcurve5cases}, if $\varepsilon = 1$, then an initial right arc of the form $\eta_1^{\pm+}$ or $\eta_1^{0+}$ cannot occur. \end{lemma}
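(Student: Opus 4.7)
The plan is to mimic the proof of Corollary \ref{cor:eta1} (underlying Proposition \ref{prop:d2neq0}), modified so that the target of the length-one horizontal arrow is now the basis element with index $0$ rather than an odd index.

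Since $\varepsilon = 1$, the curve $\gamma_0$ turns downward at $y_0$, so $y_0$ is the top endpoint of the initial right arc. By Remark \ref{rem:gamma0} I work in a vertically simplified basis $\{y_i\}$ of the full knot-like complex whose restriction to $\gamma_0$ is simultaneously vertically and horizontally simplified. Suppose for contradiction the initial right arc has one of the stated forms. Then its bottom endpoint $y_k$ has $+$ behavior, i.e., the left arc at $y_k$ ascends from $y_k$. Translating as in Corollary \ref{cor:eta1}, this forces $k > 0$ to be even with $\d y_{k-1} = \V^{m_{k-1}} y_k \pmod{(\U)}$, and the length-one right arc from $y_k$ up to $y_0$ yields $\langle \d y_k, y_0 \rangle = \U$.

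Next, I run the $\d^2 = 0$ computation from Proposition \ref{prop:d2neq0}. Writing
\[ \d y_{k-1} = \V^{m_{k-1}} y_k + \U \sum_\alpha \U^{a_\alpha} \V^{b_\alpha} y_{\ell_\alpha}, \]
the coefficient of $y_0$ in $\d^2 y_{k-1}$ equals
\[ \V^{m_{k-1}} \langle \d y_k, y_0 \rangle + \U \sum_\alpha \U^{a_\alpha} \V^{b_\alpha} \langle \d y_{\ell_\alpha}, y_0 \rangle = \U \V^{m_{k-1}} + \U \cdot (\text{stuff}). \]
The desired contradiction with $\d^2 = 0$ will follow once the second term is shown to lie in $(\U^2)$, equivalently, once $\langle \d y_{\ell_\alpha}, y_0 \rangle \in (\U)$ is verified for every $\alpha$.

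This last step is the main obstacle, because Remark \ref{rk:dxixj} only covers targets with odd index and so does not apply here. The replacement observation is that in any vertically simplified basis, the only nonzero mod-$(\U)$ pieces of the differential have the form $\V^{m_j} y_{j+1}$ with $j$ odd, so the possible mod-$(\U)$ targets are exactly the basis elements with even index $\geq 2$. In particular $y_0$ is never such a target; this is precisely where the special role of $y_0$ as the base generator of the vertically simplified basis enters. Hence $\langle \d^2 y_{k-1}, y_0 \rangle \equiv \U \V^{m_{k-1}} \not\equiv 0 \pmod{(\U^2)}$, contradicting $\d^2 = 0$.
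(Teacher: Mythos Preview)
Your argument is correct and follows essentially the same route as the paper's: both translate the $+$ at the bottom of the initial right arc into a horizontal arrow of length one from an even-indexed $y_k$ ($k>0$) to $y_0$ in a vertically simplified basis, and then derive a $\d^2=0$ contradiction from $\d y_{k-1}$. Your write-up is in fact a bit more careful than the paper's sketch (which simply invokes \cite[Lemma~3.7]{Hom-infinite-rank}), since you explicitly justify the replacement for Remark~\ref{rk:dxixj} by observing that in a vertically simplified basis the only mod-$\U$ targets of $\d$ are $y_j$ with $j$ even and $j\geq 2$, so $y_0$ can never appear.
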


\begin{proof}
The result follows from Lemma 3.7 of \cite{Hom-infinite-rank}, which, in the language of immersed curves, states that if $\varepsilon = 1$ and $\eta_n^{\pm+}$ or $\eta_n^{0+}$ is the initial right arc, then $n \geq 2$.  (We remind the reader that the sign conventions of the $a_i$-invariants in \cite{Hom-infinite-rank} differ from the conventions in \cite{DHSTmore}.)  Since the initial right arc is a part of $\gamma_0$, by Remark~\ref{rem:gamma0} we may assume we are working with a horizontally and vertically simplified basis.

For the convenience of the reader, we sketch the proof. If $\varepsilon = 1$, then an initial right arc of the form $\eta_n^{\pm+}$ or $\eta_n^{0+}$ translates to a vertically simplified basis $\{y_i\}$ with a horizontal arrow of length $n$ from $y_{i+1}$ to $y_0$ and a vertical arrow of length $m_i$ from $y_i$ to $y_{i+1}$, $i$ odd. If $n=1$, then since $\{y_i\}$ is vertically simplified, we have $\langle \d^2 y_{i}, y_{0} \rangle \neq 0 \mod (\U\V)$, contradicting that $\d^2 = 0$. Hence, the result follows.  
\end{proof}

\section{Cables}\label{sec:cables}
In this section, we use \cite[Theorem 1]{HW-cables} to study the effect of cabling on $\HFKm(K)$ by understanding the effect of cabling on right arcs of the immersed curve associated to $K$, from which Theorems \ref{thm:main} and \ref{thm:iteratedtorus} follow readily. Note that the following proposition only describes a subset of the right arcs produced by $(p,q)$-cabling.

\begin{proposition}\label{prop:cabling}
Consider the immersed curve associated to a knot-like complex.  
\begin{enumerate}[font=\upshape]

\item \label{it:notessential} Consider a noninitial right arc of the form $\eta_n^{\pm \pm}$. Under the operation of $(p, q)$-cabling:

\begin{enumerate}[font=\upshape]
\item \label{it:notessential1a} A right arc $\eta_n^{--}$ yields a right arc of length $pn$.
\item \label{it:notessential1b} A right arc $\eta_n^{-+}$ yields a right arc of length $pn-p+1$.
\item \label{it:notessential1c} A right arc $\eta_n^{+-}$ yields a right arc of length $pn+p-1$.		
\item \label{it:notessential1d} A right arc $\eta_n^{++}$ yields a right arc of length $pn$.
\end{enumerate}
Moreover, in this case the resulting arc is noninitial and has the same form as the original right arc.

\item \label{it:essential2} Suppose $\varepsilon = 1$ and consider an initial right arc of the form $\eta_n^{\pm \pm}$ or $\eta_n^{0\pm}$. Under the operation of $(p, q)$-cabling:

\begin{enumerate}[font=\upshape]
\item\label{it:essential2a} 
Right arcs $\eta_n^{--}$, $\eta_n^{+-}$, and $\eta_n^{0-}$ yield right arcs of length

\begin{tabular}{@{\quad $\boldsymbol{\cdot}$ }lll}
\hspace{.05cm}$pn$      & \quad  for \quad $q<p(2\tau -1)$,\\
\hspace{.05cm}$pn+p-2p\tau+q-1$ &\quad  for \quad  $p(2\tau -1)<q<2p\tau$,\\
\hspace{.05cm}$pn+p-1$   & \quad  for \quad  $2p\tau<q$.
\end{tabular}
%
%

\item\label{it:essential2b} 
Right arcs $\eta_n^{-+}$, $\eta_n^{++}$, and $\eta_n^{0+}$ yield right arcs of length

\begin{tabular}{@{\quad $\boldsymbol{\cdot}$ }lll}
\hspace{.05cm}$pn-p+1$      & \quad  for \quad $q<p(2\tau -1)$,\\
\hspace{.05cm}$pn-2p\tau+q $ \phantom{$+p-1$} &\quad  for \quad  $p(2\tau -1)<q<2p\tau$,\\
\hspace{.05cm}$pn$   & \quad  for \quad  $2p\tau<q$.
\end{tabular}
\end{enumerate}
Furthermore, the right arc obtained in each case from \eqref{it:essential2a} and \eqref{it:essential2b} is noninitial and the second superscript does not change.
\end{enumerate}
\end{proposition}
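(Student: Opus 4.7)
The plan is to apply Hanselman--Watson's cabling procedure \cite{HW-cables} directly to each of the listed arc types. That procedure takes the immersed curve of $K$, replaces each component by a shifted $p$-fold parallel cover, and reattaches along a new essential arc whose slope is determined by $q$. Once this machine is invoked, the proposition becomes a case-by-case counting problem: for each arc type $\eta_n^{\pm\pm}$ or $\eta_n^{0\pm}$, compute how many pegs the resulting arc wraps. Parts (1) and (2) are treated separately, since the new essential arc is relevant only in the initial-arc case.

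For part (1), fix a noninitial arc $\eta_n^{\pm\pm}$. Because it lies away from the essential arcs, the HW construction produces $p$ nearly-parallel copies of it, shifted by one column of pegs each, with no $q$-dependent interference. A direct peg count completes the four sub-cases: when both ends turn in the same vertical direction (cases $\eta_n^{--}$ and $\eta_n^{++}$), the $p$ copies fit into a block of width $pn$; when the ends turn oppositely, the block either contracts by $p-1$ pegs (giving $pn-p+1$ for $\eta_n^{-+}$) or expands by $p-1$ pegs (giving $pn+p-1$ for $\eta_n^{+-}$). The sign pattern $\pm\pm$ is preserved by inspection, and the arc remains noninitial because the newly constructed essential arcs of $K_{p,q}$ sit elsewhere in the curve.

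Part (2) is the main obstacle and is where the parameter $q$ and the $\tau$-thresholds $p(2\tau-1)$ and $2p\tau$ enter. Here the initial arc abuts the essential left arc, so the $p$ copies produced by HW must be reconnected through the new essential arc of $K_{p,q}$. The vertical position of $y_0$ is controlled by $\tau$ via Section~\ref{sec:tauandepsilon}, and the hypothesis $\varepsilon = 1$ together with Lemma~\ref{lem:eta1} guarantees that each relevant initial arc turns downward at $y_0$ and has length $n \geq 2$ in the $\eta_n^{\pm+}$ and $\eta_n^{0+}$ cases, which keeps the bookkeeping consistent. The three regimes correspond geometrically to the new essential arc passing below the block of pegs wrapped by the initial arc ($q < p(2\tau-1)$, where the answer agrees with the noninitial analogue of the same type), passing above that block ($q > 2p\tau$, where it agrees with the complementary noninitial type), or cutting across it (the intermediate regime, whose length is linear in $q$ with slope one, accounting for the $-2p\tau + q$ term). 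In every sub-case, the second superscript is preserved because the bottom end of the initial arc is untouched by the reattachment, and the resulting arc is noninitial because the new essential arc has been routed elsewhere by the HW procedure. The substantive work is matching the precise boundaries of the three $q$-regimes with the HW reattachment rule; I expect this to require careful figures in each of the six sub-cases, but no new mathematical ideas beyond \cite[Theorem 1]{HW-cables}.
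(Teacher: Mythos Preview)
Your overall plan---invoke \cite[Theorem 1]{HW-cables} and do a case-by-case peg count---is exactly the paper's approach. But the mechanism you describe for Part~\eqref{it:notessential} is not what the HW procedure does, and your sketch would not survive being made precise. The recipe is: draw $p$ copies of $\gamma$ left to right, each \emph{scaled vertically by a factor of $p$} and staggered in height by $q$; connect loose ends; then collapse the $p$ peg-columns to one. The paper tracks only the \emph{rightmost} ($p$th) copy of the given right arc. After vertical scaling, an arc that spanned $n$ pegs now has endpoints $pn$ units apart; when the other columns are slid over, each original peg acquires $p-1$ new pegs immediately below it, so the $p$th copy of $\eta_n^{--}$ now spans $pn$ pegs. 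The $\pm(p-1)$ corrections for $\eta_n^{-+}$ and $\eta_n^{+-}$ record whether an arc end turns toward or away from the $p-1$ newly-inserted pegs at that end. Your picture of ``$p$ nearly-parallel copies shifted by one column of pegs each'' that ``fit into a block of width $pn$'' is not this: the length $pn$ comes from the vertical scaling of a \emph{single} copy, not from assembling $p$ copies.

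For Part~\eqref{it:essential2} your outline is in the right spirit. The paper makes the three $q$-regimes concrete by naming four pegs $u,u',v,w$---the images of the pegs adjacent to $y_0$ in the $p$th copy and of the peg adjacent to $x_0$ in the $(p{-}1)$th copy---and then the three cases are exactly $w<u'$, $u'<w<u$, $u<w$. One misplaced step: you invoke Lemma~\ref{lem:eta1} here to force $n\ge 2$ in the $\eta_n^{\pm+}$ and $\eta_n^{0+}$ cases, but that bound is not used (or needed) in the proof of this proposition; it is applied only downstream, in the proof of Proposition~\ref{prop:main}. The present statement is a pure peg count valid for all $n\ge 1$.
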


Before we prove Proposition \ref{prop:cabling}, we first show how it implies Propositions \ref{prop:main} and \ref{prop:iteratedtorus}, whose statements we recall.

\begin{proposition}\label{prop:mainbody}
If $K$ is a nontrivial knot, then 
$$\Ord(K_{p_1, q_1; p_2, q_2; \dots ; p_m, q_m}) \geq \max \left\{ p_1  p_2 \dots  p_m \left(\Ord(K)-1\right)+1, p_1  p_2 \dots  p_m \right\}.$$
\end{proposition}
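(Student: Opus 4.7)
The plan is to exhibit a sufficiently long right arc in the immersed curve associated to $K_{p_1, q_1; \dots; p_m, q_m}$ and conclude via Lemma~\ref{lem:rightarc}. Since knot Floer homology detects the unknot, nontriviality of $K$ gives $N := \Ord(K) \geq 1$, so Lemma~\ref{lem:rightarc} provides a right arc of length $N$ in the immersed curve for $K$. I will track this arc through the $m$ successive cablings using Proposition~\ref{prop:cabling}; write $P_k = p_1 \cdots p_k$. First I reduce to $\varepsilon(K) \in \{0, 1\}$: if $\varepsilon(K) = -1$, replace $K$ by $-K$, since $\Ord(-L) = \Ord(L)$ and cabling commutes with mirroring up to a sign change of the $q_i$, which does not affect the torsion order of the result. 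When $\varepsilon(K) = 0$, the initial right arc of $\gamma_0$ is the degenerate $\eta^0$, so $\gamma_0$ contributes no right arcs and the length-$N$ right arc must lie on another component and is therefore noninitial.

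The heart of the argument is the first cabling. If the length-$N$ arc is noninitial, Proposition~\ref{prop:cabling}(1) yields a noninitial right arc of length at least $p_1(N-1)+1$ (the minimum across the four formulas, attained by $\eta_N^{-+}$). If the arc is initial then $\varepsilon = 1$, and Proposition~\ref{prop:cabling}(2) produces a noninitial right arc of length at least $p_1 N$ in case (a) and at least $p_1(N-1)+1$ in case (b). Thus the output length is always at least $p_1(N-1)+1$. When $N = 1$, the arc types that would give output length exactly $1$ are precisely those ruled out by Corollary~\ref{cor:eta1} (noninitial $\eta_1^{-+}$) and Lemma~\ref{lem:eta1} (initial $\eta_1^{\pm +}$ and $\eta_1^{0+}$), so the first cabling actually yields a noninitial arc of length at least $p_1$. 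Moreover, inspecting the remaining possibilities in Propositions~\ref{prop:cabling}(1) and~\ref{prop:cabling}(2)(a) shows that this arc has type $\eta^{--}$, $\eta^{+-}$, or $\eta^{++}$; in particular, it is \emph{not} of type $\eta^{-+}$.

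Next I iterate, invoking Proposition~\ref{prop:cabling}(1) at each subsequent step: the arc remains noninitial, its type is preserved, and its length evolves by one of $\ell \mapsto p\ell$, $\ell \mapsto p\ell + p - 1$, or $\ell \mapsto p(\ell - 1) + 1$ depending on the type. A short telescoping computation shows that even in the worst case $\eta^{-+}$ one has $\ell_m - 1 = (P_m/p_1)(\ell_1 - 1)$, so $\ell_1 \geq p_1(N-1)+1$ forces $\ell_m \geq P_m(N-1)+1$. When $N = 1$, the type is not $\eta^{-+}$, so each step at least multiplies the length by $p_i$, and $\ell_1 \geq p_1$ forces $\ell_m \geq P_m$. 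Combining the two bounds, Lemma~\ref{lem:rightarc} gives $\Ord(K_{p_1, q_1; \dots; p_m, q_m}) \geq \max\{P_m(N-1)+1, P_m\}$, as desired.

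The main obstacle is the $N = 1$ regime: one must simultaneously guarantee that the first cabling produces an arc of length at least $p_1$ (rather than the weaker $p_1(N-1)+1 = 1$) and that its type supports multiplicative growth under further cablings. This is exactly what Corollary~\ref{cor:eta1} and Lemma~\ref{lem:eta1} supply, and they are essential for the $P_m$ term in the $\max$; without them, the naive iteration of the single-cabling bound would lose a factor of $p_i$ at each step and fail to recover $P_m$.
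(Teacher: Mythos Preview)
Your proof is correct and follows essentially the same approach as the paper's: reduce to $\varepsilon(K)\geq 0$ by mirroring, locate a right arc of length $N=\Ord(K)$ via Lemma~\ref{lem:rightarc}, use Proposition~\ref{prop:cabling} to track it through the successive cablings, and invoke Corollary~\ref{cor:eta1} and Lemma~\ref{lem:eta1} to handle the $N=1$ regime. The organization differs only cosmetically---you treat the first cabling and then iterate uniformly, whereas the paper separates the $m=1$ case before splitting $m>1$ into $n>1$ and $n=1$---but the key observations (that the output arc is noninitial with preserved second superscript, and that for $N=1$ the surviving types avoid $\eta^{-+}$ so the length grows at least multiplicatively) are identical.
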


\begin{proof}
Let $K$ be a nontrivial knot. Since $\Ord(K) = \Ord(-K)$~\cite[Section 5.1]{Ozsvath-Szabo:2006-1} and $\varepsilon(K) = -\varepsilon(-K)$~\cite[Proposition 3.6]{Hom:2014-1b}, by mirroring if necessary, we may assume that $\varepsilon(K) \geq 0$. As observed in \cite{AlishahiEftekharyunknotting}, since knot Floer homology detects the unknot \cite[Theorem 1.2]{OSgenus}, it follows that $\Ord(K) \geq 1$. Writing $\Ord(K) =n$, Lemma \ref{lem:rightarc} implies that the immersed curve associated to $K$ has a right arc of the form $\eta_n^{\pm \pm}$ or $\eta_n^{0\pm}$ with $n \geq 1$.  We begin with the case of $m = 1$.  


We first consider the case where we have a noninitial right arc of the form $\eta_n^{\pm \pm}$ with $n \geq 1$. We consider the four cases in part \eqref{it:notessential} of Proposition \ref{prop:cabling} under the operation of $(p_1,q_1)$-cabling:
\begin{enumerate}[(a)] 
	\item\label{a} A right arc of the form $\eta_n^{--}$ yields a right arc of length $p_1 n$. Since $n \geq 1$, we have that $p_1n \geq \max \left\{ p_1(n-1)+1, p_1  \right\}$.
	\item\label{b} A right arc of the form $\eta_n^{-+}$ yields a right arc of length $p_1(n-1)+1$. Corollary \ref{cor:eta1} states that $\eta_1^{-+}$ cannot occur, hence $n \geq 2$, which implies that $p_1(n-1)+1\geq \max \left\{p_1(n-1)+1, p_1  \right\}$.
	\item\label{c} A right arc of the form $\eta_n^{+-}$ yields a right arc of length $p_1(n+1) -1$.  Since $n \geq 1$, we have that  $p_1(n +1)-1 \geq \max \left\{p_1(n-1)+1, p_1  \right\}$.
	\item\label{d} A right arc of the form $\eta_n^{++}$ yields a right arc of length $p_1 n$. Since $n \geq 1$, we have that $p_1 n \geq \max \left\{p_1(n-1)+1, p_1  \right\}$.
\end{enumerate}
We see that in all cases, we have a right arc of length at least $\max \left\{p_1(n-1)+1, p_1  \right\}$. 

Now suppose that $\eta_n^{\pm \pm}$ is the initial right arc, which implies that $\varepsilon(K) \neq 0$; hence $\varepsilon(K) = 1$. The desired result now follows from part \eqref{it:essential2} of Proposition \ref{prop:cabling}. That is, we obtain a right arc of length at least $\max \left\{p_1(n-1)+1, p_1  \right\}$ in all cases. This is clear for all cases except \eqref{it:essential2b} when $q<p(2\tau-1)$. Lemma~\ref{lem:eta1} states that $\eta_1^{-+}$ cannot occur; hence in \eqref{it:essential2b}, $n \geq 2$. Thus, in all cases, we have a right arc of length at least $\max \left\{p_1(n-1)+1, p_1  \right\}$.  This completes the proof for $m = 1$.


%

For $m > 1$, we consider two cases separately. If $n>1$, then the resulting new arc for $K_{p_1,q_1}$ is noninitial by Proposition~\ref{prop:cabling} and has length at least $p_1(n-1)+1=\max \left\{p_1(n-1)+1, p_1  \right\}$. Repeating the analysis from \ref{a}, \ref{b}, \ref{c}, and \ref{d}, we see that $$\Ord(K_{p_1, q_1; p_2, q_2; \dots ; p_m, q_m}) \geq p_1  p_2 \dots  p_m \left(n-1\right)+1.$$ If $n=1$, as we have seen from above that the arc that we have started with cannot be a noninitial arc with the form $\eta_1^{-+}$, an initial arc of the form $\eta_1^{\pm +}$, or an initial arc of the form $\eta_1^{0 +}$. Hence the resulting new arc for $K_{p_1,q_1}$ is noninitial with the form $\eta_n^{--}$, $\eta_n^{+-}$, or $\eta_n^{++}$ by Proposition~\ref{prop:cabling} and has length at least $p_1 =\max \left\{p_1(n-1)+1, p_1  \right\}$. Then repeating the analysis from \ref{a}, \ref{c}, and \ref{d}, we see that $$\Ord(K_{p_1, q_1; p_2, q_2; \dots ; p_m, q_m}) \geq p_1  p_2 \dots  p_m.$$ This concludes the proof.
\end{proof}

\begin{proposition}\label{prop:iteratedtorusmainbody}
Let $K$ be an L-space knot and $K\neq T_{2,3}$. If $J= K_{p_1, q_1; p_2, q_2; \dots ; p_m, q_m}$, then $\gamma_0$ associated to the complement of $J$ contains a right arc with length at least $p_1 \dots p_m \left(\Ord(K)+1\right)-1$. In particular,
 $$\Ord(J) \geq p_1 \dots p_m \left(\Ord(K)+1\right)-1.$$
Moreover, if the bridge index of $K$ is $\Ord(K)+1$, then the above inequality is an equality.
\end{proposition}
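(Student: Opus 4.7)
The plan is to identify a noninitial right arc in $\gamma_0$ of $K$ of length $\Ord(K)$ and form $\eta^{+-}$, propagate it through iterated cabling via Proposition~\ref{prop:cabling}, and translate back to a bound on $\Ord(J)$ using Lemma~\ref{lem:rightarc}. The equality case will use the bridge-number upper bound on torsion order.

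Since $K$ is a nontrivial L-space knot, $\tau(K) > 0$ and $\varepsilon(K) = 1$, and $\gamma_0$ is the standard staircase determined by $\Delta_K(t) = \sum_{i=0}^{2k}(-1)^i t^{s_i}$ with $s_0 > s_1 > \cdots > s_{2k}$: the generators $a_0 = y_0, a_1, \ldots, a_{2k}$ sit at Alexander gradings $s_0, \ldots, s_{2k}$, the horizontal arrow $a_{2i+1} \to a_{2i}$ has length $h_{i+1} := s_{2i} - s_{2i+1}$, and the vertical arrow $a_{2i+1} \to a_{2i+2}$ has length $v_{i+1} := s_{2i+1} - s_{2i+2}$. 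A routine check of end-behaviors shows that the initial right arc has the form $\eta_{h_1}^{0-}$, while for $i \geq 1$ each noninitial right arc has form $\eta_{h_{i+1}}^{+-}$, since its top end abuts the upward left arc $a_{2i-1} \to a_{2i}$ and its bottom end abuts the downward left arc $a_{2i+1} \to a_{2i+2}$.

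The key structural input is the existence of a noninitial right arc of length exactly $\Ord(K) = \max_i h_i$. Two standard facts about L-space knots make this possible for $K \neq T_{2,3}$: (i) the first gap satisfies $h_1 = 1$ (equivalently, $\HFKhat(K, g-1) \cong \F$), and (ii) the only L-space knot with $k = 1$ (three-generator staircase, equivalently $\Delta_K = t^g - 1 + t^{-g}$) is $T_{2,3}$. So for $K \neq T_{2,3}$ one has $k \geq 2$: if $\Ord(K) = 1$ every $h_i$ equals $1$ and any noninitial arc works, whereas if $\Ord(K) \geq 2$ then $h_1 = 1 < \Ord(K)$ forces the maximum to be attained at some $j \geq 2$. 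Either way we extract a noninitial $\eta_{\Ord(K)}^{+-}$ in $\gamma_0$.

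Iterating Proposition~\ref{prop:cabling}\eqref{it:notessential1c}, a single $(p,q)$-cabling sends a noninitial $\eta_n^{+-}$ to another noninitial right arc of the same form with length $pn + p - 1 = p(n+1) - 1$, independently of $q$, and the output still lives on $\gamma_0$ of the cable. Setting $n_0 = \Ord(K)$ and $n_j = p_j(n_{j-1}+1) - 1$, a straightforward induction gives $n_m = p_1 p_2 \cdots p_m(\Ord(K)+1) - 1$, so $\gamma_0$ of $J$ contains a right arc of length at least $n_m$ and Lemma~\ref{lem:rightarc} yields $\Ord(J) \geq n_m$. For the equality case, Schubert's satellite formula gives $\br(J) = p_1 p_2 \cdots p_m \br(K)$, and \cite[Corollary~1.8]{Juhasz-Miller-Zemke:2020-1} gives $\Ord(J) \leq \br(J) - 1$; when $\br(K) = \Ord(K)+1$, this upper bound equals $n_m$, matching the lower bound. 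The main obstacle is the structural step: verifying that $h_1 = 1$ for every L-space knot and that $T_{2,3}$ is the unique L-space knot with a three-generator staircase—these are precisely what makes the $T_{2,3}$ exclusion sharp, since $T_{2,3}$ has $h_1 = \Ord(K) = 1$ with no noninitial right arc available.
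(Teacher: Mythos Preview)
Your proof is correct and follows essentially the same approach as the paper: identify a noninitial right arc of form $\eta^{+-}_{\Ord(K)}$ on $\gamma_0$ using Hedden--Watson's result that the first gap is $1$ together with the fact that $T_{2,3}$ is the unique L-space knot with a three-term staircase, then iterate Proposition~\ref{prop:cabling}\eqref{it:notessential1c} and finish the equality case with Schubert and \cite{Juhasz-Miller-Zemke:2020-1}.

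One small slip: for a positive L-space knot you have $\tau(K)=g(K)\geq 1$, so $y_0$ sits at height $\tau\geq 1$ and is \emph{not} contained in $\{0\}\times[-\tfrac12,\tfrac12]$; according to the conventions in Figures~\ref{fig:immersedcurve4cases} and \ref{fig:immersedcurve5cases} the initial right arc therefore has the form $\eta_{h_1}^{--}$ rather than $\eta_{h_1}^{0-}$ (at the top, the essential left arc descends toward $(-\tfrac12,0)$). This is harmless for your argument, since you never use the form of the initial arc---you only need that the maximal $h_i$ is realized by some noninitial right arc, which follows from $h_1=1$ and $k\geq 2$ exactly as you wrote.
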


\begin{proof}
Suppose $K$ is an L-space knot and $K\neq T_{2,3}$. Recall from \cite[Theorem 1.2]{OSlens} (see also, for example, \cite[Section 7]{HendricksManolescu}) that for an L-space knot, the knot Floer complex is determined by the Alexander polynomial. 
Moreover, they showed that the Alexander polynomial of $K$ has the following form:
\[ \Delta_{K} (t) = \sum_{k=0}^{2\ell} (-1)^k t^{\alpha_k}\]
where $\alpha_0, \ldots , \alpha_{2\ell}$ is a decreasing sequence of integers and $\alpha_0 - \alpha_{2\ell}$ is twice the genus of $K$~\cite[Corollary 1.3]{OSlens}. Moreover, since $T_{2,3}$ is the unique genus one L-space knot~\cite{Ni:2007-1, Burde:1967-1, Gonzalez-Acuna:1970-1} and $K\neq T_{2,3}$, we have that $\alpha_0 - \alpha_{2\ell}>2$. Lastly, Hedden-Watson~\cite[Corollary 9]{Hedden-Watson:2018-1} showed that $\alpha_0 = \alpha_1 +1$ and $\alpha_{2\ell-1} = \alpha_{2\ell} +1$. In particular, there is an $\alpha_i$ such that $\alpha_1 > \alpha_i > \alpha_{2\ell -1}$.  


The torsion order of an L-space knot may be computed easily from its Alexander polynomial~\cite[Lemma 5.1]{Juhasz-Miller-Zemke:2020-1}. More precisely, we have
\begin{equation}\label{eq:orderlspace}
\Ord(K) = \max \left\{\alpha_{i-1}-\alpha_i \mid 1\leq i \leq 2\ell \right\}.\end{equation}
Moreover, it is well known that the knot Floer complex of an L-space knot has a special form. More precisely, the differentials in the complex form a staircase (see e.g.\ \cite[Section 7]{HendricksManolescu}). Hence, following the discussion in Section~\ref{sec:complextocurve}, it can be easily verified that in fact there is a noninitial right arc of the form $\eta_{n}^{+-}$ in $\gamma_0$ associated to $K$ where $n=\Ord(K)$. Part \eqref{it:notessential1c} of Proposition \ref{prop:cabling} implies that the immersed curve $\gamma_0$ associated to $K_{p_1, q_1}$ has a right arc of the form $\eta^{+-}$ of length $p_1n+p_1-1 = p_1(n+1)-1$. The desired inequality now follows from repeatedly applying part \eqref{it:notessential1c} of Proposition \ref{prop:cabling}.


For the equality, it is a classical theorem of Schubert~\cite{Schubert:1954-1} that the bridge index of $J$ is equal to $p_1p_2\cdots p_m$ times the bridge index of $K$. Moreover, in \cite[Corollary 1.8]{Juhasz-Miller-Zemke:2020-1} they prove that the torsion order of a knot is less than the bridge index of the knot. Hence, we conclude that if the bridge index of $K$ is $\Ord(K)+1$, then the torsion order of $J$ is $p_1 \dots p_m \left(\Ord(K)+1\right)-1$.
\end{proof}

%


We now prove Proposition \ref{prop:cabling}.

\begin{proof}[Proof of Proposition \ref{prop:cabling}]
Recall that \cite[Theorem 1]{HW-cables} states that the immersed curve $\gamma_{p,q}$ corresponding to the $(p,q)$-cable of $K$ can be obtained from the immersed curve $\gamma$ for $K$ by the following three step procedure: 
\begin{enumerate}[(i)]
	\item \label{it:step1} Draw $p$ copies of $\gamma$ from left to right, each scaled vertically by a factor of $p$, staggered in height such that each copy is $q$ units below the preceding copy to the left.  
	\item \label{it:step2} Connect the loose ends of successive copies of the curve. 	
	\item \label{it:step3} Translate the pegs horizontally so that they lie in the same vertical line, carrying the curves along with them.  Pull the curves tight so that there is at least one peg between two intersection points of each arc and $\{0\} \times \mathbb{R}$. 
	Observe that the left end is $(p-1)q$ units higher than the right end. We shift the left end $\frac{(p-1)q}{2}$ units down and the right end $\frac{(p-1)q}{2}$ units up, so that the two ends points have the same vertical coordinate. We normalize the vertical coordinates so that these two end points are at $(-\frac{1}{2},0)$ and $(\frac{1}{2},0)$, respectively.

	
\end{enumerate}

For a picture of Step~\ref{it:step1} see Figure~\ref{fig:immersedcurve4cases2} and for Step~\ref{it:step2}, see Figure~\ref{fig:initialsecond}.  We will analyze the contribution of the rightmost copy, namely the $p$th copy, of $\gamma$ from Step~\ref{it:step1} to the curve $\gamma_{p,q}$. We begin by considering the four different types $\eta_n^{\pm \pm}$ of right arc, and assume that $\eta_n^{\pm \pm}$ is a noninitial right arc.  While there may be a nontrivial local system present, this will not be relevant to the proof since this part of the argument only makes use of right arcs and we may confine the nontrivial local system to left arcs.

We first consider \eqref{it:notessential1a}, that is, a right arc $\eta_n^{--}$, scaled vertically by a factor of $p$. Since $\eta^{--}_n$ is noninitial, we see that after translating the pegs horizontally and pulling tight, the image of $\eta_n^{--}$ spans $pn$ pegs, as each of the $n$ pegs originally spanned by $\eta_n^{--}$ have now become $p$ pegs (consisting of the original peg and $p-1$ pegs below it), for a total of $pn$ pegs.

\begin{figure}[htb!]
\begin{center}
\labellist
	\pinlabel {$\eta_n^{--}$} at  50 10
	\pinlabel {$\eta_n^{-+}$} at  155 10
	\pinlabel {$\eta_n^{+-}$} at  255 10
	\pinlabel {$\eta_n^{++}$} at  355 10
\endlabellist
\includegraphics{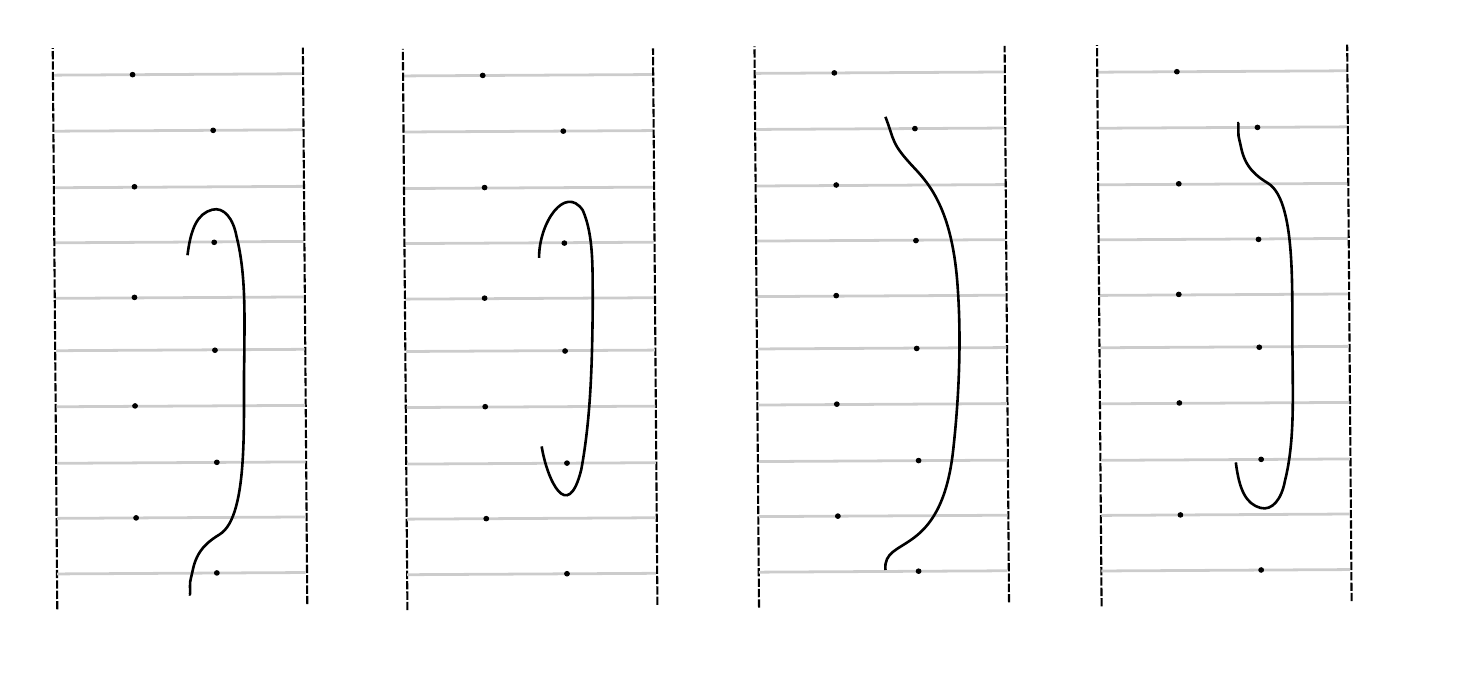}
\caption{The rightmost copy of a right arc of $\gamma$ from Step \ref{it:step1}, with the four different types of end behavior. Here, $n=3$ and $p=2$.}
\label{fig:immersedcurve4cases2}
\end{center}
\end{figure}

We next consider \eqref{it:notessential1b}, that is, a right arc $\eta_n^{-+}$, scaled vertically by a factor of $p$. When we translate the pegs horizontally, we see that the image of $\eta_n^{-+}$ spans $p(n-1)+1$ pegs, as each of the first $n-1$ pegs originally spanned by $\eta_n^{-+}$ have now become $p$ pegs (consisting of the original peg and $p-1$ pegs below it), plus we still have the last of the original $n$ pegs, for a total of $p(n-1)+1=pn-p+1$ pegs.


We next consider \eqref{it:notessential1c}, that is,  a right arc $\eta_n^{+-}$, scaled vertically by a factor of $p$. When we translate the pegs horizontally, we see that the image of $\eta_n^{+-}$ spans $pn+p-1$ pegs, as each of the original $n$ pegs has now become $p$ pegs (consisting of the original peg and the $p-1$ pegs below it), plus an additional $p-1$ pegs above the first of the original pegs. 


Lastly, we consider \eqref{it:notessential1d}, a right arc of the form $\eta_n^{++}$; the result follows as in the case of $\eta_n^{--}$.

Furthermore, it is clear that if $\eta_n^{\pm \pm}$ is a noninitial right arc, then the right arc obtained in each of the above four cases is a noninitial right arc and the form does not change.  This completes the proof for noninitial right arcs, i.e. Part \eqref{it:notessential} of the proposition.

\begin{figure}[htb!]
\begin{center}
\labellist

\pinlabel {$u_0$} at  50 100
\pinlabel {$u'_0$} at  50 84
\pinlabel {$v_0$} at  50 52
\pinlabel {$w_0$} at  50 37

\endlabellist
\includegraphics[width=45mm]{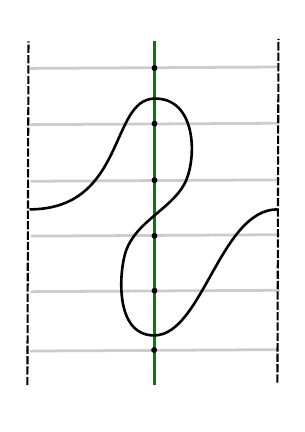}
\caption{Pegs $u_0$, $u'_0$, $v_0$, and $w_0$ of an initial right arc of the form $\eta_n^{--}$ with $\varepsilon =1$ and $\tau>0$.}
\label{fig:initialfirst}
\end{center}
\end{figure}

We next consider the initial right arcs $\eta_n^{\pm \pm}$ and  $\eta_n^{0 \pm}$ with $\varepsilon =1$.
We only give a detailed proof for the case $\eta_n^{--}$ with $\varepsilon =1$.  The rest of the cases can be proved similarly. Note that given an initial right arc of the form $\eta_n^{--}$ and $\varepsilon = 1$, we have $\tau>0$.  Further, we will not need to consider local systems since we will now restrict to working with $\gamma_0$.


Let $u_0$ be the peg at $\left(0,\tau + \frac{1}{2}\right)$, $u'_0$ be the peg at $\left(0,\tau - \frac{1}{2}\right)$, $v_0$ be the peg at $\left(0,\tau - \frac{1}{2} - n\right)$, and $w_0$ be the peg at $\left(0,-\tau +\frac{1}{2}\right)$ for $\gamma$. Note that $u_0$ and $u'_0$ are the pegs that are located right above and below the intersection point between the essential left arc and the vertical line $\{0\} \times \mathbb{R}$, respectively, and $w_0$ is the peg right above the intersection point between the essential right arc and the vertical line $\{0\} \times \mathbb{R}$ (see Section~\ref{sec:tauandepsilon} and Figure~\ref{fig:initialfirst}). Here, we are using the fact that there is a symmetry of the immersed curves under 180 degree rotation~\cite[Theorem~7]{HRW:2018}. Lastly, note that there are the two intersection points between the initial right arc and the vertical line $\{0\} \times \mathbb{R}$, and $v_0$ is the peg right below the bottom intersection point (see Figure~\ref{fig:initialfirst}).



Let $u$ and $u'$ be the pegs for the immersed curve $\gamma_{p,q}$  in the $p$th copy of $\gamma$ from Step \ref{it:step1} of Hanselman-Watson's procedure that correspond to $u_0$ and $u'_0$, respectively. 
The peg that corresponds to $v_0$ in the $p$th copy of $\gamma$ is denoted by $v$ and the peg that corresponds to $w_0$ in the $(p-1)$th copy of $\gamma$  is denoted by $w$ (see Figure~\ref{fig:initialsecond}). By abusing notation, the vertical coordinates of the pegs $u$, $u'$, $v$, and $w$ will also be denoted by $u$, $u'$, $v$, and $w$, respectively. With this notation, note that we have $u'<u$.

%

%

\begin{figure}[htb!]
\begin{center}
\labellist

\pinlabel {$u$} at  92 117
\pinlabel {$u'$} at  93.2 102.5
\pinlabel {$w$} at  33 79.3
\pinlabel {$v$} at  92 72
\pinlabel {$(1)$ $w< u'$} at  63 0

\pinlabel {$u$} at  232 102
\pinlabel {$u'$} at  233.2 87.8
\pinlabel {$w$} at  175 94.3
\pinlabel {$v$} at  232 57.5
\pinlabel {$(2)$ $u'<w<u $} at  202 0

\pinlabel {$u$} at  371.8 87
\pinlabel {$u'$} at  373 72.8
\pinlabel {$w$} at  317 108.9
\pinlabel {$v$} at  371.8 42.6
\pinlabel {$(3)$ $u<w$} at  345 0

\endlabellist
\includegraphics[width=165mm]{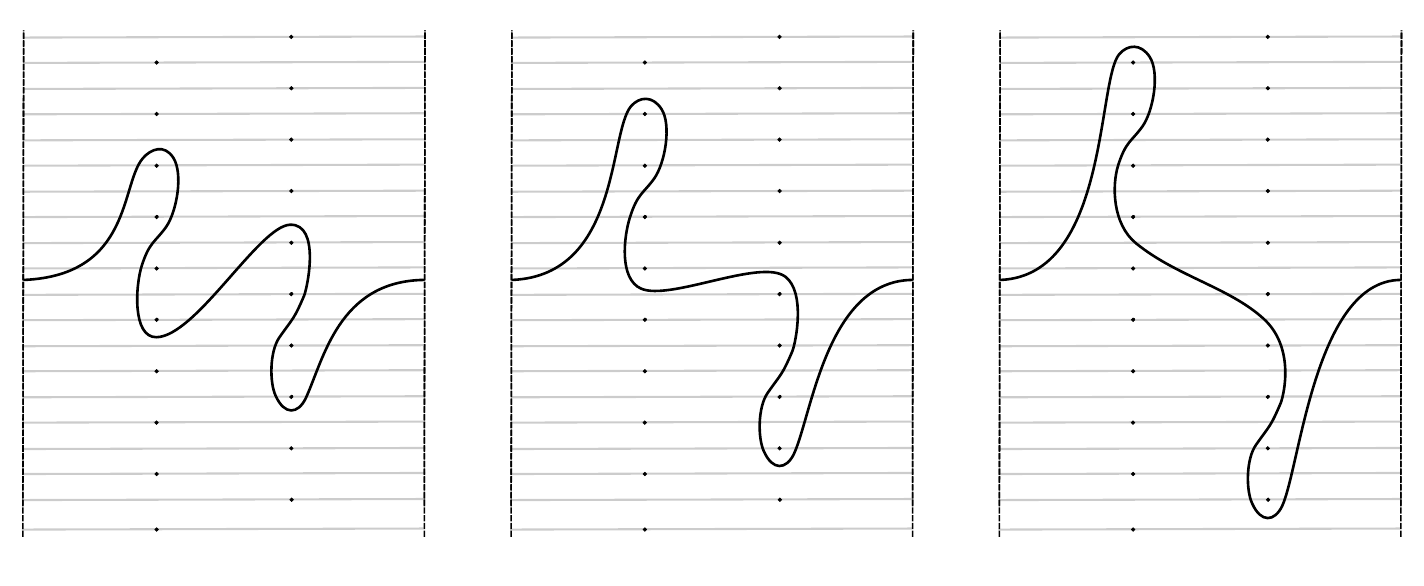}
\caption{Pegs $u$, $u'$, $v$, and $w$ for the immersed curve $\gamma_{p,q}$ for three cases.}
\label{fig:initialsecond}
\end{center}
\end{figure}

As depicted in Figure~\ref{fig:initialsecond}, we consider the following three cases: 
$$w<u', \qquad\quad  u'<w<u, \qquad\quad \text{and} \qquad\quad u<w.$$
We leave it as an easy excercise for the readers to verify that these are the exact cases that appear on \eqref{it:essential2a}, i.e.\ $w<u'$ if and only if $q<p(2\tau-1)$, $u'<w<u$ if and only if $p(2\tau -1)<q<2p\tau$, and $u<w$ if and only if $2p\tau<q$.

Suppose that $w<u'$ which corresponds to Figure~\ref{fig:initialsecond} $(1)$. By translating the pegs horizontally, we see that the image of the $p$th copy of $\eta^{--}_n$ from Step~\ref{it:step1} takes the form $\eta^{--}$ and, as in the noninitial right arc case from \eqref{it:notessential1a}, it can be checked that it spans $pn$ pegs. To be more precise, the image spans all the pegs between $u'$ and $v$ including $u'$ and excluding $v$. Following steps \ref{it:step1}, \ref{it:step2}, and \ref{it:step3}, it can be easily computed that the vertical coordinate of $u'$ minus the vertical coordinate of $v$ is $pn$. Hence, we conclude that the resulting arc is noninitial and takes the form $\eta^{--}_{pn}$.

Suppose that $u'<w<u$ which corresponds to Figure~\ref{fig:initialsecond} $(2)$. In this case, the image of the $p$th copy of $\gamma$ takes the form $\eta^{+-}$. Moreover, the image spans all the pegs between $w$ and $v$ excluding both $w$ and  $v$. As before, it can be easily computed that the vertical coordinate of $w$ minus the vertical coordinate of $v$ is $pn+p-2p\tau+q$. We conclude that the resulting arc is noninitial and takes the form $\eta^{+-}_{pn+p-2p\tau+q-1}$.

Lastly, suppose that $u<w$ which corresponds to Figure~\ref{fig:initialsecond} $(3)$. Again, the image of the $p$th copy of $\gamma$ takes the form $\eta^{+-}$. In this case, the image spans all the pegs between $u$ and $v$ excluding both $u$ and  $v$. Since the vertical coordinate of $u$ minus the vertical coordinate of $w$ is
$pn+p$, the resulting arc is noninitial and takes the form $\eta^{+-}_{pn+p-1}$. This completes the proof of the proposition for initial right arcs of the form $\eta^{--}_n$.  As mentioned above, the remaining cases can be addressed similarly.
\end{proof}

\section{Applications}\label{sec:applications}

In this section, we prove the main theorems and applications stated in the introduction. Most of these follow directly from Proposition~\ref{prop:mainbody} and \ref{prop:iteratedtorusmainbody}. We first prove Theorems~\ref{thm:main} and \ref{thm:iteratedtorus}. We prove them together, since the proofs are similar. We recall the statements.

\begin{theorem}\label{thm:mainbody}
If $K$ is a nontrivial knot, then $u(K_{p_1, q_1; p_2, q_2; \dots ; p_m, q_m}) \geq p_1 p_2 \dots p_m$. Moreover, if $1<p_0 < q_0$ and $q_0 \neq 3$, then $u(T_{p_0, q_0; p_1, q_1; \dots ; p_m, q_m}) \geq p_0 p_1 \dots p_m-1$. \end{theorem}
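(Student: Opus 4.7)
The plan is to combine the Alishahi--Eftekhary unknotting bound \eqref{eq:unknotting} with the torsion-order estimates already established in Propositions~\ref{prop:mainbody} and~\ref{prop:iteratedtorusmainbody}.

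For the first inequality, I would begin by noting that any nontrivial knot $K$ satisfies $\Ord(K) \geq 1$; this follows from the fact that knot Floer homology detects the unknot \cite[Theorem~1.2]{OSgenus}. Plugging this lower bound into Proposition~\ref{prop:mainbody} gives $\Ord(K_{p_1, q_1; \dots; p_m, q_m}) \geq p_1 p_2 \cdots p_m$, and \eqref{eq:unknotting} immediately completes the argument.

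For the second inequality, I would apply Proposition~\ref{prop:iteratedtorusmainbody} with $K = T_{p_0, q_0}$. First I would check its hypotheses: a positive torus knot is a nontrivial L-space knot, and since the pair $(p_0, q_0)$ with $1 < p_0 < q_0$ uniquely represents its torus knot, the assumption $q_0 \neq 3$ excludes exactly the exceptional case $T_{p_0, q_0} = T_{2,3}$. The argument then reduces to showing $\Ord(T_{p_0, q_0}) = p_0 - 1$. For the upper bound, one combines $\Ord(K) \leq \br(K) - 1$ from \cite[Corollary~1.8]{Juhasz-Miller-Zemke:2020-1} with Schubert's classical computation $\br(T_{p_0, q_0}) = p_0$. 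For the matching lower bound, I would apply the L-space formula \eqref{eq:orderlspace} to the explicit Alexander polynomial of $T_{p_0, q_0}$: its nonzero exponents are encoded by the numerical semigroup $\langle p_0, q_0 \rangle$, and a direct inspection produces two consecutive exponents differing by exactly $p_0 - 1$. Substituting $\Ord(T_{p_0, q_0}) = p_0 - 1$ into Proposition~\ref{prop:iteratedtorusmainbody} yields
\[ \Ord(T_{p_0, q_0; p_1, q_1; \dots; p_m, q_m}) \geq p_0 p_1 \cdots p_m - 1, \]
and a final appeal to \eqref{eq:unknotting} finishes the proof.

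The main (though minor) obstacle is the explicit lower bound $\Ord(T_{p_0, q_0}) \geq p_0 - 1$ via the Alexander polynomial; the remainder of the argument is a careful matching of hypotheses to the already-established propositions.
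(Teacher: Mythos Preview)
Your proposal is correct and follows essentially the same route as the paper: apply Proposition~\ref{prop:mainbody} and Proposition~\ref{prop:iteratedtorusmainbody} to bound the torsion order, then invoke \eqref{eq:unknotting}. The only cosmetic differences are that the bound $p_1\cdots p_m$ already appears directly as the second term of the $\max$ in Proposition~\ref{prop:mainbody} (so the detour through $\Ord(K)\geq 1$ is unnecessary), and the paper simply cites \cite[Corollary~5.3]{Juhasz-Miller-Zemke:2020-1} for $\Ord(T_{p_0,q_0})=p_0-1$ rather than recomputing it.
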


\begin{proof} From Proposition~\ref{prop:mainbody}, we have $\Ord(K_{p_1, q_1; p_2, q_2; \dots ; p_m, q_m}) \geq p_1 p_2 \dots p_m$. Moreover, since a positive torus knot $T_{p_0,q_0}$ is an $L$-space knot with $\Ord(T_{p_0,q_0})=p_0-1$ (see e.g.\ \cite[Corollary 5.3]{Juhasz-Miller-Zemke:2020-1}), by Proposition~\ref{prop:iteratedtorusmainbody} we have $\Ord(T_{p_0, q_0; p_1, q_1; \dots ; p_m, q_m}) \geq p_0 p_1 \dots p_m-1$. The proof is then completed by using the unknotting bound of Alishahi-Eftekhary in \eqref{eq:unknotting}.
\end{proof}

Next we prove Corollary~\ref{cor:fusionnumber}, whose statement we recall.
\begin{corollary}\label{cor:fusionnumberbody}
If $K$ is a nontrivial slice knot, then any slice disk bounded by $K_{p_1, 1; p_2, 1; \dots ; p_m, 1}$ has at least $p_1 p_2 \dots p_m+1$ local minima with respect to the radial function on $B^4$ restricted to the disk.
\end{corollary}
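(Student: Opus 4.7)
The plan is to combine Proposition~\ref{prop:mainbody} with the bound of Juh\'asz-Miller-Zemke~\cite[Corollary 1.7]{Juhasz-Miller-Zemke:2020-1}, which states that any slice disk for a nontrivial knot $J$ has at least $\Ord(J) + 1$ local minima with respect to the radial function on $B^4$ restricted to the disk. Given this input, the corollary reduces to establishing the torsion-order lower bound
\[
\Ord(K_{p_1, 1; p_2, 1; \dots; p_m, 1}) \;\geq\; p_1 p_2 \cdots p_m.
\]

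First I would observe that since $K$ is nontrivial, the detection of the unknot by knot Floer homology~\cite{OSgenus} forces $\Ord(K) \geq 1$. Applying Proposition~\ref{prop:mainbody} with this input then yields
\[
\Ord(K_{p_1, 1; p_2, 1; \dots; p_m, 1}) \;\geq\; \max\bigl\{\, p_1 p_2 \cdots p_m(\Ord(K) - 1) + 1,\; p_1 p_2 \cdots p_m \,\bigr\} \;\geq\; p_1 p_2 \cdots p_m,
\]
which moreover certifies that the iterated cable is itself nontrivial, so that the Juh\'asz-Miller-Zemke hypothesis is met. Feeding this bound into the JMZ inequality immediately gives the desired conclusion that any slice disk for $K_{p_1, 1; p_2, 1; \dots; p_m, 1}$ has at least $p_1 p_2 \cdots p_m + 1$ local minima.

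There is no real obstacle, as both ingredients are already in hand: the torsion-order bound from Proposition~\ref{prop:mainbody} is proved in Section~\ref{sec:cables}, and the local-minima bound is quoted directly from~\cite{Juhasz-Miller-Zemke:2020-1}. For completeness one should note that the statement is nonvacuous, since an iterated $(p_i,1)$-cable of a slice knot is slice: at each step one takes $p_i$ parallel copies of a slice disk and attaches $p_i - 1$ half-twisted bands, as recalled in the introduction.
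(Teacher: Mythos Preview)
Your proposal is correct and follows essentially the same approach as the paper: apply Proposition~\ref{prop:mainbody} to obtain $\Ord(K_{p_1,1;\dots;p_m,1}) \geq p_1\cdots p_m$, then invoke the Juh\'asz--Miller--Zemke bound \cite[Corollary 1.7]{Juhasz-Miller-Zemke:2020-1}. The paper's proof is just a terser version of what you wrote, omitting the auxiliary remarks about nontriviality and the existence of a slice disk.
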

\begin{proof}Let $K$ be a nontrivial slice knot. By Proposition~\ref{prop:mainbody}, we have $\Ord(K_{p_1, 1; p_2, 1; \dots ; p_m, 1}) \geq p_1 p_2 \dots p_m$. Juh\'{a}sz-Miller-Zemke~\cite[Corollary 1.7]{Juhasz-Miller-Zemke:2020-1} bound the number of local minima of a slice disk from below by the torsion order, which completes the proof.
\end{proof}

Recall that for a knot $K$, its bridge index is denoted by $\br(K)$ and its braid index is denoted by $\b(K)$. We prove Corollary~\ref{cor:bridgenumber}, whose statement we recall.

\begin{corollary}\label{cor:bridgenumberbody}
Let $K$ be either an algebraic knot or an iterated torus knot $T_{p_0, q_0; p_1, q_1; \dots ; p_m, q_m}$ where $T_{p_0, q_0} \neq T_{2,3}$. If $J$ is concordant to $K$, then $\b(J) \geq \b(K)$ and $\br(J) \geq \br(K)$.
\end{corollary}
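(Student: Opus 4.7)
The plan is to combine Proposition~\ref{prop:iteratedtorusmainbody} with the concordance-invariant formulation of Juh\'asz-Miller-Zemke's bound on the bridge and braid indices, namely that if $J$ is concordant to $K$, then
\[ \br(J) \geq \Ord(K) + 1 \quad \text{and} \quad \b(J) \geq \Ord(K) + 1. \]
This is the form in which Corollary~1.8 of \cite{Juhasz-Miller-Zemke:2020-1} is used to deduce their Corollary~1.10 in the torus knot case, and follows from the fact that a genus-zero cobordism controls the torsion order in terms of the bridge/braid index of the other end. Given it, the corollary reduces to establishing $\Ord(K) \geq \br(K) - 1$ and $\Ord(K) \geq \b(K) - 1$ for the classes of knots in the statement.

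First I would treat $K = T_{p_0, q_0; p_1, q_1; \ldots; p_m, q_m}$ with $T_{p_0, q_0} \neq T_{2,3}$ and $1 < p_0 < q_0$. The base $T_{p_0, q_0}$ is a nontrivial L-space knot satisfying $\br(T_{p_0, q_0}) = p_0$ and, by~\eqref{eq:orderlspace}, $\Ord(T_{p_0, q_0}) = p_0 - 1$, so the hypothesis $\br = \Ord + 1$ of the equality clause of Proposition~\ref{prop:iteratedtorusmainbody} is satisfied and yields $\Ord(K) = p_0 p_1 \cdots p_m - 1$. Schubert's formula gives $\br(K) = p_0 p_1 \cdots p_m$, and the analogous multiplicativity of the braid index under positive cabling gives $\b(K) = p_0 p_1 \cdots p_m$. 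Hence $\Ord(K) = \br(K) - 1 = \b(K) - 1$, and the JMZ bound above supplies $\br(J) \geq \br(K)$ and $\b(J) \geq \b(K)$.

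For algebraic knots, the case $K = T_{2,3}$ is immediate since $J$ is nontrivial and so has $\br(J), \b(J) \geq 2$. If the cable presentation of $K$ starts with $T_{p_0, q_0} \neq T_{2,3}$, the previous paragraph applies. The remaining case $K = T_{2,3; p_1, q_1; \ldots; p_m, q_m}$ with $m \geq 1$ I would handle by viewing $K$ as an iterated cable of $K' := T_{2,3; p_1, q_1}$, which is a nontrivial L-space knot distinct from $T_{2,3}$ (the L-space property here is Hedden's cabling theorem under the algebraic Puiseux condition). A direct computation using~\eqref{eq:orderlspace} applied to the cable Alexander polynomial $\Delta_{K'}(t) = \Delta_{T_{p_1, q_1}}(t) \cdot \Delta_{T_{2,3}}(t^{p_1})$ yields $\Ord(K') = 2 p_1 - 1 = \br(K') - 1$, after which Proposition~\ref{prop:iteratedtorusmainbody} applied to $K'$ with parameters $(p_2, q_2), \ldots, (p_m, q_m)$ gives $\Ord(K) \geq 2 p_1 p_2 \cdots p_m - 1 = \br(K) - 1$, and the argument concludes as before.

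The main obstacle is the algebraic case whose cable decomposition begins with $T_{2,3}$, since Proposition~\ref{prop:iteratedtorusmainbody} explicitly excludes $T_{2,3}$ as the base L-space knot. The Alexander polynomial max-gap computation for $T_{2,3; p_1, q_1}$ is the key technical step: one needs to verify that, among the decreasing exponent sequence of $\Delta_{K'}$, the largest consecutive gap is exactly $2p_1 - 1$, coming from the interaction of the factor $t^{p_1} - 1 + t^{-p_1}$ with the exponents of $\Delta_{T_{p_1, q_1}}$. Once this single base case is in place, the iterative cabling machinery developed in Section~\ref{sec:cables} carries the rest through without further issue.
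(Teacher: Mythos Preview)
Your proof has a genuine gap at the very first step. The ``concordance-invariant formulation'' of the Juh\'asz--Miller--Zemke bound you invoke,
\[
J \text{ concordant to } K \ \Longrightarrow\ \br(J) \geq \Ord(K) + 1,
\]
is false in general, and no composition of a concordance with a bridge cobordism can produce it: the torsion order is not a concordance invariant. Indeed, by Proposition~\ref{prop:mainbody} the $(p,1)$-cable of any nontrivial slice knot has $\Ord \geq p$, yet such a cable is slice and hence concordant to the unknot, whose bridge index is $1$. Taking $K$ to be that cable and $J$ the unknot already violates your inequality. This is not how JMZ deduce their Corollary~1.10 either; for torus knots the point is that the entire complex is $\gamma_0$, so the torsion is carried by a concordance-invariant piece.

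The paper's proof avoids this by using the stronger conclusion of Proposition~\ref{prop:iteratedtorusmainbody}: not merely $\Ord(K) \geq p_0 p_1 \cdots p_m - 1$, but that the long right arc lies in $\gamma_0$. Since $\gamma_0$ is a concordance invariant \cite[Proposition~2]{HW-cables}, any $J$ concordant to $K$ has the same $\gamma_0$, hence $\Ord(J) \geq p_0 p_1 \cdots p_m - 1$, and one then applies the JMZ bound to $J$ itself. You invoked only the equality clause of Proposition~\ref{prop:iteratedtorusmainbody} (the value of $\Ord(K)$) and discarded exactly the $\gamma_0$ statement that makes the concordance step go through. For algebraic knots the paper proceeds uniformly by computing $\alpha_1 - \alpha_2 = p_0 p_1 \cdots p_m - 1$ from the Alexander polynomial, with no case split on the base torus knot; your re-basing manoeuvre for chains starting at $T_{2,3}$ would also work once the $\gamma_0$ issue is repaired (algebraic knots are $L$-space knots, so their whole curve is $\gamma_0$), but it is a longer path to the same place.
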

\begin{proof}First, assume that $K$ is an iterated torus knot $T_{p_0, q_0; p_1, q_1; \dots ; p_m, q_m}$ where $T_{p_0, q_0} \neq T_{2,3}$. As before, since $\Ord(T_{p_0,q_0})=p_0-1$, by Proposition~\ref{prop:iteratedtorusmainbody} we conclude that $\gamma_0$ associated to $K$ contains a right arc with length at least $p_0p_1\cdots p_m -1$. By Schubert~\cite{Schubert:1954-1}, we see that $\br(K) = \b(K) = p_0p_1\cdots p_m$ (see also \cite{Williams:1992-1}). Since $\gamma_0$ is a concordance invariant, if $J$ is concordant to $K$, then $\Ord(J) \geq p_0p_1\cdots p_m -1$. Now, the proof is complete by using the bound $\br(J) \geq \Ord(J) - 1$ \cite[Corollary 1.8]{Juhasz-Miller-Zemke:2020-1} and the fact that $\br(J) \leq \b(J)$.

Now, assume that $K$ is an algebraic knot. For this we do not need to use Proposition~\ref{prop:iteratedtorusmainbody}. Recall that if $K$ is an algebraic knot then it has the following form $T_{p_0, q_0; p_1, q_1; \dots ; p_m, q_m}$ where $q_{i+1} > p_iq_ip_{i+1}$  (see e.g.\ \cite{Eisenbud-Neumann:1985-1}). In particular, it is an $L$-space knot~\cite{Hedden:2009-1}. Using this fact, it is an easy exercise to verify that $\alpha_1 - \alpha_2 = p_0p_1\cdots p_m -1$ where \[ \Delta_{K} (t) = \sum_{k=0}^{2\ell} (-1)^k t^{\alpha_k}\]
and $\alpha_0, \ldots , \alpha_{2\ell}$ is a decreasing sequence of integers. Then by \eqref{eq:orderlspace} and the bound of \cite[Corollary 1.8]{Juhasz-Miller-Zemke:2020-1}, we conclude that $\gamma_0$ associated to $K$ contains a right arc with length $p_0p_1\cdots p_m -1$. As before, since $\gamma_0$ is a concordance invariant, the proof is complete. 
\end{proof}

\begin{remark}
The above proof shows that if $K$ is an algebraic knot or an iterated torus knot, then $\br(K) = \b(K) = \Ord(K) + 1$.  
\end{remark}

Lastly, we restate and prove Corollary~\ref{cor:genusunknotting}.

\begin{corollary}\label{cor:genusunknottingbody}
There exists a family of iterated torus knots $\{K_i\}$, such that the gap between $u(K_i)$ and $g_4(K_i)$ is arbitratily large.\end{corollary}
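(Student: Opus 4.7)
My plan is to combine the lower bound on $u$ from Theorem~\ref{thm:iteratedtorus} with an upper bound on $g_4$ that grows strictly more slowly. Theorem~\ref{thm:iteratedtorus} gives a multiplicative lower bound $u(T_{p_0,q_0;p_1,q_1;\ldots;p_m,q_m}) \geq p_0 p_1 \cdots p_m - 1$ whenever $1 < p_0 < q_0$ and $q_0 \neq 3$, so by letting some $p_i$ grow we can push the torsion-order lower bound on $u$ to infinity. The rest of the proof is devoted to controlling $g_4$ for this same family.

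I would take a family of the form $K_n = T_{p_0, q_0; p_1, q_1}$ with $p_0, q_0$ fixed (say $p_0 = 2$, $q_0 = 5$, to satisfy the hypotheses of Theorem~\ref{thm:iteratedtorus}) and with $p_1 = n$ together with $q_1 < 0$ chosen of small absolute value, so that Theorem~\ref{thm:iteratedtorus} yields $u(K_n) \geq 2n - 1$. To bound $g_4(K_n)$ from above, I would build a surface in $B^4$ tailored to the negative framing: stack $n$ parallel push-offs of a minimal Seifert surface for $T_{p_0,q_0}$ and then realize the cabling by saddle moves whose four-dimensional arrangement allows one to cancel a linear-in-$n$ amount of genus that is forced by the three-dimensional Seifert formula. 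Iterating this construction (or applying it on top of a further layer of cabling) will allow the multiplicative factor in the $u$ bound to outpace the 4-dimensional surface genus, giving $u(K_n) - g_4(K_n) \to \infty$.

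The hard part will be carrying out the $g_4$ bound. Naively, the standard Seifert-surface-for-cables formula $g(K_{p,q}) = p\, g(K) + (p-1)(|q|-1)/2$ produces a 3-genus that matches the $u$ lower bound up to a bounded error, and concordance invariants such as $\tau$ or the Rasmussen $s$-invariant that obey Hom-type cabling formulas confirm that this 3-genus is often also the 4-genus. Beating this requires identifying a choice of $q_i$ for which the cabling admits nontrivial 4-dimensional simplification, or exhibiting a concordance from $K_n$ to a simpler knot with bounded $g_4$. One concrete route is to arrange the parameters so that a positive generator of the cabling complex splits off a slice summand (e.g.\ via a ribbon concordance), bounding $g_4(K_n)$ by the 4-genus of the residual knot, which is independent of $n$.

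Granting such a $g_4$ estimate for the chosen family, the corollary is immediate: for $K_n$ the lower bound on $u(K_n)$ from Theorem~\ref{thm:iteratedtorus} grows linearly (or multiplicatively) in the iterated parameters, while the constructed surface in $B^4$ provides an upper bound on $g_4(K_n)$ growing at a strictly slower rate, so the difference $u(K_n) - g_4(K_n)$ is unbounded as $n \to \infty$.
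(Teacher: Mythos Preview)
Your proposal correctly identifies the overall strategy—bound $u$ from below using Theorem~\ref{thm:iteratedtorus} and $g_4$ from above by something strictly smaller—but the crucial step, the $g_4$ upper bound, is never actually carried out. You write ``Granting such a $g_4$ estimate for the chosen family, the corollary is immediate,'' but this is precisely the content that must be proved. The suggestions you offer (stacking push-offs and ``cancelling a linear-in-$n$ amount of genus,'' or arranging that ``a positive generator of the cabling complex splits off a slice summand'') are not made concrete, and you yourself observe that the cabling formulas for $\tau$ and $s$ often force $g_4$ to equal the Seifert genus, which would leave no gap. For your proposed family $T_{2,5;n,q_1}$ with small negative $q_1$, no surface in $B^4$ of the required genus is exhibited, and no concordance to a simpler knot is produced.

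The paper resolves this with a specific construction absent from your sketch. It starts not with a torus knot but with the doubly iterated torus knot $K = T_{2,-5;2,5}$. On the natural genus-$6$ Seifert surface for $K$ (two parallel copies of a genus-$2$ Seifert surface for $T_{2,-5}$ joined by five half-twisted bands) one finds the slice knot $T_{2,5}\#T_{2,-5}$ bounding a genus-$4$ subsurface; surgering along it in $B^4$ yields a genus-$2$ surface for $K$, and Wu's cabling formula for $\nu^+$ confirms $g_4(K)=2$. Setting $K_i = K_{i,1}$, taking $i$ parallel copies of that surface gives $g_4(K_i)\le 2i$ (with equality via Sato's $\nu^+$ inequality), while Theorem~\ref{thm:iteratedtorus} applied to $-K_i = T_{2,5;2,-5;i,-1}$ yields $u(K_i)\ge 4i-1$. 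Hence $u(K_i)-g_4(K_i)\ge 2i-1$. The missing idea in your argument is exactly this geometric mechanism—an explicit slice knot on the Seifert surface—that depresses $g_4$ below the Seifert genus in a way that persists under further $(i,1)$-cabling.
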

\begin{proof}
Let $K$ be $T_{2,-5;2,5}$. Then there is a genus 6 Seifert surface $\Sigma$ for $K$ obtained by taking two parallel copies of the genus 2 Seifert surface for $T_{2,-5}$ and connecting them with 5 half-twisted bands. Note that $T_{2,5} \# T_{2,-5}$ sits on $\Sigma$ and it bounds a genus 4 surface embedded in $\Sigma$. Since $T_{2,5} \# T_{2,-5}$ is a slice knot, if we surger along $T_{2,5} \# T_{2,-5}$ on $\Sigma$ we obtain a genus 2 surface bounded by $K$ properly embedded in $B^4$. Hence we conclude $g_4(K) \leq 2$.

In fact, we show that $g_4(K) = 2$. Note that $\nu^+(T_{2,-5})=0$ (see e.g.\ \cite{Jozsef-Celorai-Golla:2017-1}), where $\nu^+$ is the concordance invariant introduced by Hom-Wu~\cite{Hom-Wu:2016-1}. Also, they show that $\nu^+$ gives a lower bound on the 4-genus. Then by a cabling formula for $\nu^+$ of Wu~\cite[Theorem 1.1]{Wu:2016-1}, we conclude that $\nu^+(K)=2$ and $g_4(K) = 2$. 


Let $i$ be a positive integer and $K_i$ be $K_{i,1}$. Then we have that $g_4(K_i) = 2i$, where the upper bound is obtained by taking $n$ parallel copies of the genus 2 surface bounded by $K$, and the lower bound is given by the the inequality by Sato~\cite[Corollary 1.4]{Sato:2018-1} for $\nu^+$. Lastly, by Theorem~\ref{thm:iteratedtorus}  and the fact that $u(K) = u(-K)$, we have that $u(K_i) \geq 4i-1$. This completes the proof.
\end{proof}

\bibliographystyle{alpha}
\bibliography{bib}

\end{document}